\documentclass[12pt, reqno]{amsart}
\usepackage{amssymb, amsmath, amsfonts, amscd, calligra, mathrsfs}
\usepackage[raiselinks=false,colorlinks=true,citecolor=blue,urlcolor=,
linkcolor=blue,bookmarksopen=true,pdftex]{hyperref}
\numberwithin{equation}{section}
\usepackage[dvipsnames]{xcolor}
\usepackage{tikz}\usepackage{tikz-cd}
\usepackage{enumerate}
\usepackage[mathscr]{eucal}
\newtheorem{theorem}{Theorem}[section]
\newtheorem{proposition}[theorem]{Proposition}
\newtheorem{lemma}[theorem]{Lemma}
\newtheorem{remark}[theorem]{Remark}

\newtheorem{corollary}[theorem]{Corollary}

\newcommand{\rank}{\textrm{rank}}

\newcommand{\ad}{\textrm{ad}}

\newcommand{\Aut}{\textrm{Aut}}

\newcommand{\At}{\textrm{At}}

\newcommand{\from}{\textrm{from}}
\newcommand{\Gal}{\textrm{Gal}}

\newcommand{\Ad}{\textrm{Ad}}

\usepackage{etoolbox}
\makeatletter
\patchcmd{\@tocline}{\hfil}{\dotfill}{}{}
\makeatother
\begin{document}

\title[Parabolic Lie algebroid connections]{Parabolic Lie algebroid connections on parabolic principal 
bundles over curves}

\author[I. Biswas]{Indranil Biswas} 

\address{Department of Mathematics, Shiv Nadar University, NH91, Tehsil Dadri, Greater Noida, Uttar Pradesh 
201314, India.}

\email{indranil.biswas@snu.edu.in, indranil29@gmail.com}

\author[P. Biswas]{Pritthijit Biswas} 

\address{Kerala School of Mathematics, Kunnamangalam, Kozhikode, 673571 Kerala, India.}

\email{pritthijit@ksom.res.in, pritthibis06@gmail.com}

\subjclass[2020]{14H60; 53D17; 53B15; 32C38}

\keywords{Parabolic Lie algebroid; connection; parabolic principal bundle; rigid reduction} 

\date{}

\begin{abstract}
Let $X$ be a compact connected Riemann surface and $S\,\subset\, X$ a finite subset. We consider parabolic 
principal $G$--bundles $\mathcal{E}_{G}$ on $X$ with parabolic structure on
$S$, where $G$ is a connected complex reductive affine algebraic group. Let $P\, \subset\, G$ be a parabolic
subgroup and $\mathcal{E}_{P}\, \subset\, \mathcal{E}_{G}$ a 
reduction of structure group of $\mathcal{E}_{G}$ to $P$. We give a criterion 
for the existence of a parabolic Lie algebroid connection on $\mathcal{E}_{P}$ for any given parabolic Lie 
algebroid on $(X,\,S)$ whose anchor map is not surjective. More precisely,
$\mathcal{E}_{P}$ admits a parabolic Lie algebroid connection if the reduction $\mathcal{E}_{P}\, \subset\,
\mathcal{E}_{G}$ is parabolically infinitesimally rigid. In particular, the 
Harder--Narasimhan reduction of $\mathcal{E}_{G}$ admits a parabolic Lie 
algebroid connection.
\end{abstract}

\maketitle
\tableofcontents
\section{Introduction}

A holomorphic vector bundle $E$ on a compact Riemann surface admits a holomorphic connection if and only if 
every indecomposable component of $E$ is of degree zero \cite{At}. This generalizes to a criterion for 
the existence of a holomorphic connection on a principal $G$--bundle, where
$G$ is a connected complex reductive affine algebraic group \cite{AB1}.

Lie algebroid connections constitute a generalization of holomorphic connections. For a detailed study of Lie 
algebroids, Lie algebroid connections and their properties, we refer the reader to \cite{AO}, \cite{BMRT}, 
\cite{BR}, \cite{To1} and \cite{To2} (these are briefly recalled in Section \ref{2.2} and Section \ref{2.3}).

We recall some known results on holomorphic Lie algebroid connections.
Given a holomorphic Lie algebroid on a compact connected Riemann surface, a complete classification
of holomorphic vector bundles admitting a holomorphic Lie algebroid connection was established
in \cite{ABKS1}. This was generalized for holomorphic principal $G$--bundles in \cite{Bi1}.
Given any holomorphic Lie algebroid whose underlying holomorphic vector bundle is stable,
\cite{ABKS} gives a complete characterization of
all the parabolic vector bundles that admit a parabolic Lie algebroid connection.

Take a holomorphic Lie algebroid $\mathcal{L}\,=\, (V,\, \phi)$ on a compact connected Riemann surface $X$ 
such that the anchor map $\phi$ is not surjective. Let $G$ be a connected complex reductive affine algebraic 
group and $E_{G}$ be a holomorphic principal $G$--bundle on $X$. Take a parabolic subgroup $P\, \subset\,G$ 
and a holomorphic reduction of structure group $E_{P}\,\subset\, E_{G}$ of $E_{G}$ to $P$. Then $E_{P}$ admits 
a holomorphic $\mathcal{L}$--connection if the reduction $E_{P}\,\subset\, E_{G}$ is infinitesimally rigid, 
i.e., $H^{0}(X,\, \ad(E_{G})/\ad(E_{P}))\,=\,0$ \cite{BBK}. Our aim here is to generalize this to the context 
of principal bundles with parabolic structure over a divisor.
 
Let $X$ be a compact connected Riemann surface and $S\,\subset\, X$ be a finite subset.
As before, $G$ is a connected complex reductive affine algebraic group. Let
$\mathcal{E}_{G}$ be a parabolic principal $G$--bundle with parabolic structure over $S$. Take
a parabolic subgroup $P\, \subset\, G$ and a parabolic reduction of structure group 
$\mathcal{E}_{P}\, \subset\, \mathcal{E}_{G}$ of $\mathcal{E}_{G}$ to $P$. This reduction is
called parabolically infinitesimally rigid if
$$H^{0}(X,\,\ad_{*}(\mathcal{E}_{G})/\ad_{*}(\mathcal{E}_{P}))\ =\ 0,$$
where $\ad_{*}(\mathcal{E}_{G})$ and $\ad_{*}(\mathcal{E}_{P})$ are the parabolic adjoint vector bundles of
$\mathcal{E}_{G}$ and $\mathcal{E}_{P}$ respectively with parabolic structures over $S$.
Let $\mathcal{L}_{*}\,=\, (V_{*},\,\phi_{*})$ be a parabolic Lie algebroid on $X$ with
parabolic structure over $S$.
We prove the following (see Theorem \ref{MAIN}):

\begin{theorem}\label{int1}
If the anchor map $\phi_{*}$ of the parabolic Lie algebroid $\mathcal{L}_{*}$ is not surjective, then
$\mathcal{E}_{P}$ admits a holomorphic parabolic $\mathcal{L}_{*}$--connection if the reduction
$\mathcal{E}_{P}\, \subset\, \mathcal{E}_{G}$ is parabolically infinitesimally rigid.
\end{theorem}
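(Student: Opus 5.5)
Our plan is to follow the strategy of \cite{BBK}, which does the non-parabolic case, and to transport it to the parabolic setting through the usual dictionary between parabolic bundles and orbifold ($\Gamma$--equivariant) bundles on a ramified Galois cover $Y\,\longrightarrow\, X$. Under this correspondence, $\mathcal{E}_{G}$ and $\mathcal{E}_{P}$ become a $\Gamma$--equivariant principal $G$--bundle $E'_G$ on $Y$ and a $\Gamma$--equivariant reduction $E'_P \subset E'_G$. The parabolic adjoint bundles correspond to $\ad(E'_G)$ and $\ad(E'_P)$, and $\mathcal{L}_*$ corresponds to a $\Gamma$--equivariant Lie algebroid $(V',\,\phi')$ on $Y$. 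Parabolic $\mathcal{L}_*$--connections on $\mathcal{E}_P$ are then exactly the $\Gamma$--invariant $V'$--connections on $E'_P$. The rigidity hypothesis becomes $H^0(Y,\,\ad(E'_G)/\ad(E'_P))^{\Gamma}=0$.

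\textbf{Step 1 (the obstruction).} We express the obstruction to a parabolic $\mathcal{L}_*$--connection on $\mathcal{E}_P$ as a class in $H^1(X,\, \ad_*(\mathcal{E}_P)\otimes V_*^{*})$, with the parabolic tensor and dual. It is obtained by pulling back the parabolic Atiyah sequence
$$0\to \ad_*(\mathcal{E}_P)\to \At_*(\mathcal{E}_P)\to TX(-\log S)\to 0$$
along $\phi_*$. By Serre duality, this class vanishes if it pairs to zero with every section of $H^0(X,\, \ad_*(\mathcal{E}_P)^*\otimes V_*\otimes K_X(S))$, suitably interpreted in the parabolic sense. Using the Killing form, $\ad_*(\mathcal{E}_P)^*\cong \ad_*(\mathcal{E}_G)/\ad_*(R_u)$, where $R_u$ is the unipotent radical part; equivalently, its dual is the nilradical bundle.

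\textbf{Step 2 (reduction to the Atiyah-type pairing).} The pairing of the Atiyah class with a section $\theta$ factors through $\phi_*$. A section $\theta$ of $\ad_*(\mathcal{E}_P)^*\otimes V_*\otimes K_X(S)$ composes with $\phi_*$ to give a section of $\ad_*(\mathcal{E}_P)^*\otimes TX(-\log S)\otimes K_X(S)=\ad_*(\mathcal{E}_P)^*$. Since $\phi_*$ is not surjective, its image is a proper subsheaf $TX(-\log S)(-D)$ with $D>0$ effective. This yields sections of $\ad_*(\mathcal{E}_P)^*(-D)$.

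\textbf{Step 3 (using rigidity and the Harder--Narasimhan-type argument).} We must show that the resulting pairing with the Atiyah class vanishes. As in \cite{BBK}, we want to show that every such section $\theta$ actually lands in the nilpotent part $\ad_*(\mathcal{E}_G)/\ad_*(\mathcal{E}_P)$-compatible piece. The pairing of a nilpotent-valued section with the Atiyah class of $\mathcal{E}_P$ is zero, because it reduces to a trace of nilpotent endomorphisms; this is Atiyah's argument. Rigidity enters as follows. By the Killing form duality $(\ad(E_G)/\ad(E_P))^*\cong \ad(R_u(E_P))$, the component of $\theta$ outside the nilradical maps to a section of $(\ad_*(\mathcal{E}_G)/\ad_*(\mathcal{E}_P))(-D)$-type twisted bundles. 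The non-surjectivity of $\phi_*$, with $D$ nonzero, then lets us compare with $H^0(\ad_*(\mathcal{E}_G)/\ad_*(\mathcal{E}_P))=0$ and kill it.

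\textbf{Main obstacle.} We expect the hard part to be carrying out this pairing argument with parabolic weights. Specifically, we must check that parabolic Serre duality, the Killing-form identification of parabolic duals, and the vanishing of the Atiyah pairing on nilpotent sections all hold for the parabolic (or $\Gamma$--invariant) objects. To handle this, we first work entirely on $Y$ with $\Gamma$--equivariant objects, run the \cite{BBK} argument verbatim, and check that each map is $\Gamma$--equivariant. Averaging over $\Gamma$ then gives an invariant $V'$--connection on $E'_P$. One caveat: rigidity is only assumed for $\Gamma$--invariants, so we must arrange that the argument only ever needs invariant sections, which holds because the obstruction class itself is $\Gamma$--invariant.
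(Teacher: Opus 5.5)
There is a genuine gap in Step 3: you have the roles of rigidity and of $D\neq 0$ reversed. Work $\Gamma$--equivariantly on $Y$, as you propose. Let $\theta\in H^0(Y,\,\ad(E_P)^*\otimes\mathcal{O}_Y(-D))^\Gamma$ be the section produced in Step 2. Rigidity kills the restriction $\theta|_{\mathcal{R}(E_P)}$, which is an invariant section of $\mathcal{R}(E_P)^*(-D)\cong(\ad(E_G)/\ad(E_P))(-D)$.

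What survives is not a ``nilpotent part''. It is $\theta=\bar\theta\circ\beta$, where $\beta:\ad(E_P)\to\ad(E_L)$ and $\bar\theta$ is a section of $\ad(E_L)\otimes\mathcal{O}_Y(-D)$. This is the Levi (reductive) component, and $\langle\mathrm{at}(E_P),\theta\rangle=\langle\mathrm{at}(E_L),\bar\theta\rangle$. Under $\ad(E_P)^*\cong\ad(E_G)/\mathcal{R}(E_P)$, anything coming from the nilradical is zero because $\mathfrak{n}=\mathfrak{p}^\perp$. So ``nilpotent-valued'' only makes sense after descending to $\ad(E_L)$.

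Nothing in your proposal makes $\langle\mathrm{at}(E_L),\bar\theta\rangle$ vanish. Your only use of $D\neq 0$ is the inclusion $H^0(F(-D))\subset H^0(F)$, which holds equally for $D=0$. Hence your argument would also prove that every rigid reduction admits a holomorphic connection (surjective anchor), and that is false. Take $M_1\oplus M_2$ with $\deg M_1>\deg M_2$. Its Harder--Narasimhan Borel reduction is rigid, since $M_1^*\otimes M_2$ has negative degree. But its Levi bundle $(M_1,M_2)$ admits no holomorphic connection. The case $P=G$ makes the gap stark: rigidity is vacuous there, and the whole statement is the existence of $\mathcal{L}$--connections on arbitrary bundles, \cite[Theorem 1.1]{Bi1}.

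The missing ingredient is where $D\neq 0$ genuinely enters, namely on the Levi component. For every $\mathrm{Ad}$--invariant homogeneous polynomial $f$ of degree $k\geq 1$ on $\mathfrak{l}$, $f(\bar\theta)\in H^0(Y,\,\mathcal{O}_Y(-kD))=0$, so $\bar\theta$ is pointwise nilpotent. You then still need that the Atiyah class of the reductive bundle $E_L$ pairs trivially with nilpotent sections. For principal bundles this is not a literal trace-of-nilpotents argument:
\begin{enumerate}
\item Off finitely many points, a nilpotent section gives a reduction of $E_L$ to a parabolic subgroup of $L$ whose nilradical contains it.
\item This reduction extends over the missing points by properness.
\item The nilradical bundle is orthogonal to the adjoint bundle of the reduction, and the Atiyah class factors through that adjoint bundle.
\end{enumerate}
Equivalently, $E_L$ admits a $(TY(-D),\,\iota)$--connection, which is exactly what the paper imports from \cite[Theorem 1.1]{Bi1}.

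With this added, your single Serre-duality computation becomes a dual repackaging of the paper's two steps. First, a Levi connection $\delta$ comes from \cite{Bi1}. Second, it is lifted through $0\to\mathcal{R}(E_P)\to B(\delta)\to TY(-D)\to 0$. The class of that extension is Serre dual to $H^0(Y,\,\mathcal{R}(E_P)^*(-D))^\Gamma\subset H^0(Y,\,\ad(E_G)/\ad(E_P))^\Gamma=0$. The rest of your plan is fine: the transfer to $Y$, testing only against $\Gamma$--invariant sections, and averaging.
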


The following is deduced from Theorem \ref{int1} (see Corollary \ref{main}):

\begin{theorem}\label{int2}
Take $(X,\, S)$ and $G$ as in Theorem \ref{int1}.
Let $\mathcal{E}_{G}$ be a parabolic principal $G$--bundle on $(X,\,S)$ which is not semistable.
Let $$\mathcal{E}_{P}\ \subset\ \mathcal{E}_{G}$$ be the parabolic 
Harder--Narasimhan reduction of structure group of $\mathcal{E}_{G}$.
Then there is a holomorphic parabolic $(V_{*},\, \phi_{*})$--connection on $\mathcal{E}_{P}$
if the anchor map $\phi_{*}$ of the parabolic Lie algebroid $\mathcal{L}_{*}$ is not surjective.
\end{theorem}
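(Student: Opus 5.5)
The plan is to deduce Theorem \ref{int2} directly from Theorem \ref{int1}. Since the anchor map $\phi_*$ is assumed not surjective, it suffices to show that the parabolic Harder--Narasimhan reduction $\mathcal{E}_{P}\,\subset\,\mathcal{E}_{G}$ is parabolically infinitesimally rigid, i.e.
$$H^{0}(X,\,\ad_{*}(\mathcal{E}_{G})/\ad_{*}(\mathcal{E}_{P}))\ =\ 0.$$

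\textbf{Step 1: pass to an orbifold cover.} First I will move to the orbifold picture, which is the standard way to handle parabolic bundles. Choose a ramified Galois covering $\gamma\,:\, Y\,\longrightarrow\, X$ with Galois group $\Gamma$, ramified over $S$ (and possibly one extra point), such that:
\begin{itemize}
\item[(i)] $\mathcal{E}_G$ corresponds to a $\Gamma$--equivariant principal $G$--bundle $F_G$ on $Y$;
\item[(ii)] the parabolic Harder--Narasimhan reduction corresponds to the $\Gamma$--equivariant Harder--Narasimhan reduction $F_P\,\subset\, F_G$. Uniqueness of the Harder--Narasimhan reduction makes it automatically $\Gamma$--invariant.
\end{itemize}
Under this correspondence, parabolic sections of $\ad_*(\mathcal{E}_G)/\ad_*(\mathcal{E}_P)$ are identified with $\Gamma$--invariant sections of $\ad(F_G)/\ad(F_P)$. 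Hence it is enough to prove
$$H^0(Y,\,\ad(F_G)/\ad(F_P))\ =\ 0.$$

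\textbf{Step 2: the vanishing via slopes.} This is the classical argument for the Harder--Narasimhan reduction.

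\begin{itemize}
\item[(a)] Identify $\mathfrak g/\mathfrak p$ with the nilpotent radical $\mathfrak n^-$ of the opposite parabolic. Then $\ad(F_G)/\ad(F_P)\,\cong\, F_P\times^P(\mathfrak g/\mathfrak p)$.
\item[(b)] Filter this bundle by $P$--submodules coming from the grading of $\mathfrak n^-$ by the "depth" with respect to the center of the Levi.
\item[(c)] The successive quotients are associated, via $F_L \,=\, F_P/R_u(P)$, to $L$--modules on which the center of $L$ acts through characters that are negative combinations of the simple roots defining $P$.
\item[(d)] By the defining properties of the Harder--Narasimhan reduction, $F_L$ is semistable and, for every such character $\chi$, $\deg F_L(\chi)$ is negative. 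Therefore each successive quotient is semistable of negative degree.
\item[(e)] Semistability is preserved under associated bundles in characteristic zero (Ramanan--Ramanathan), so each quotient has no nonzero sections. Hence the whole quotient has no sections.
\end{itemize}

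Alternatively, if the paper has already recorded that the parabolic Harder--Narasimhan reduction has all parabolic graded pieces of $\ad_*(\mathcal{E}_G)/\ad_*(\mathcal{E}_P)$ parabolic semistable of negative parabolic degree, I will cite that directly. Parabolic semistable of negative parabolic degree implies no nonzero sections.

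\textbf{Main obstacle.} I expect the hardest part to be the compatibility in Step 1. Specifically, I need that:
\begin{itemize}
\item the parabolic quotient $\ad_*(\mathcal{E}_G)/\ad_*(\mathcal{E}_P)$ corresponds exactly to the equivariant quotient, so that parabolic sections match invariant sections;
\item the parabolic Harder--Narasimhan reduction is the descent of the equivariant one.
\end{itemize}
Once these are in place, rigidity holds. Theorem \ref{int1} then yields a holomorphic parabolic $(V_*,\,\phi_*)$--connection on $\mathcal{E}_P$.
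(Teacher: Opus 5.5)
Your proposal is correct and is essentially the paper's argument. The paper also passes to the Galois cover and identifies the parabolic Harder--Narasimhan reduction with the ($\Gamma$--invariant, by uniqueness) Harder--Narasimhan reduction there; it does this by citing \cite[Proposition 4.1]{BBN1} and \cite[Theorem 3.14]{BBN2}, which is exactly the compatibility you flag as the main obstacle. It then gets rigidity from the negativity of the slopes of $\ad(E_G)/\ad(E_P)$, phrased as $\mu_{\max}(\ad(E_G)/E_0)<0$ together with \cite[Corollary 1]{AAB}, and concludes through Theorem \ref{4.1} and \eqref{coh}, which is equivalent to your appeal to Theorem \ref{int1}.

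One small precision in Step 2(d): with a single integer depth grading and $P$ not maximal, a graded piece of $\mathfrak g/\mathfrak p$ can carry several distinct $Z(L)^0$--characters. The corresponding quotient bundle is then a direct sum of semistable bundles of possibly different negative slopes, not one semistable bundle. Such a direct sum still has no nonzero sections, so your conclusion stands.
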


A criterion for the existence of a parabolic Lie algebroid connection on a parabolic vector bundle over a 
compact connected Riemann surface with finitely many marked points is given in \cite{ABKS}. In \cite{Bi1} it 
is shown that for a non-split parabolic Lie algebroid $\mathcal{L}_{*}$ on a compact connected Riemann 
surface $X$ with a set of finitely many marked points $S$, every parabolic principal $G$--bundle 
$\mathcal{E}_{G}$ on $(X,\,S)$ where $G$ is a connected complex reductive affine algebraic group, admits a 
parabolic $\mathcal{L}_{*}$ connection.

The proof of Theorem \ref{int1} crucially uses a correspondence between the ``parabolic objects" on $(X,\,S)$ 
and the ``$\Gamma:=\Gal(\sigma)$--equivariant objects" on $Y$ for some ramified Galois cover 
$\sigma\,:\,Y\,\longrightarrow\, X$ (see Theorem \ref{char}). First an analog of Theorem \ref{int1} is
proved for ``$\Gamma$--equivariant objects" on $Y$
(see Theorem \ref{4.1}), and then extended to the parabolic case by
using Theorem \ref{char}.

\section{Preliminaries}\label{prelim}

\subsection{Notation and set-up}\label{2.1}

Let $X$ be a compact connected Riemann surface. The sheaf of holomorphic functions on $X$ will be denoted
by ${\mathcal O}_X$. The holomorphic
tangent (respectively, cotangent) bundle of $X$ will be denoted by $TX$ (respectively, $K_X$).
Denote by $\Aut(X)$ the group of all holomorphic automorphisms of $X$.
Let $$\Gamma\ \, \subset\ \, \text{Aut}(X)$$ be a finite subgroup. So $\Gamma$ has a tautological
action on $X$.

Let $G$ be a connected reductive affine algebraic group defined over the field $\mathbb C$ of complex numbers.
The Lie algebra of $G$ will be denoted by $\mathfrak g$.
Fix a Borel subgroup $B\, \subset\, G$ and a maximal torus $T\, \subset\, B$.
Take a parabolic subgroup $P\, \subset\, G$ containing $B$. The Lie algebra of $P$ will be denoted by
$\mathfrak p$.

Let
\begin{equation}\label{e1}
p\ :\ E_{G}\ \longrightarrow \ X
\end{equation}
be a $\Gamma$--equivariant holomorphic principal $G$--bundle
over $X$. This means that $\Gamma$ acts on the total space of $E_G$ via holomorphic automorphisms such that
the following two conditions hold:
\begin{enumerate}
\item The actions of $G$ and $\Gamma$ on $E_G$ commute, and

\item the projection $p$ in \eqref{e1} is $\Gamma$--equivariant (recall that $\Gamma$ acts on $X$).
\end{enumerate}
The adjoint vector bundle of $E_G$ will be denoted by $\text{ad}(E_G)$. We recall that $\text{ad}(E_G)$ is
the holomorphic vector bundle on $X$ associated to $E_G$ for the adjoint action of $G$ on its Lie
algebra $\mathfrak g$. So every fiber of $\text{ad}(E_G)$ is a Lie algebra isomorphic to $\mathfrak g$.

Let
\begin{equation}\label{ep}
E_P\ \subset\ E_G
\end{equation}
be a $\Gamma$--equivariant holomorphic reduction of structure group of $E_G$ to the parabolic subgroup $P$.
Denote by $\text{ad}(E_P)$ the adjoint vector bundle for $E_P$. So $\text{ad}(E_P)$ is
the holomorphic vector bundle on $X$ associated to $E_P$ for the adjoint action of $P$ on its Lie
algebra $\mathfrak p$; in particular, we have
$$
\text{ad}(E_P)\ \subset\ \text{ad}(E_G)
$$
as a sub--Lie algebra bundle.

The equivariant holomorphic reduction $E_{P}\,\subset \,E_{G}$ is called {\it $\Gamma$--equivariantly
infinitesimally rigid} or {\it equivariantly infinitesimally rigid} (when $\Gamma$ is clear from the context)
if we have
\begin{equation}\label{er}
H^{0}(X,\ \ad(E_{G})/\ad(E_{P}))^{\Gamma}\,\ =\,\ 0.
\end{equation}

\subsection{Equivariant Lie algebroids}\label{2.2}

Let $X$ and $\Gamma$ be as in Section \ref{2.1}. A {\it holomorphic Lie algebroid on $X$} is a pair
$\mathcal{L}\,=\,(V,\,\phi)$, where $V$ is a holomorphic vector bundle on $X$ and $\phi\,:\,V
\,\longrightarrow \,TX$ is an ${\mathcal O}_X$--linear homomorphism satisfying the following conditions:
\begin{itemize}
\item $V$ is equipped with a $\mathbb{C}$--Lie algebra structure \[[-,\, -]\ :\ V\otimes_{\mathbb{C}}V\
\longrightarrow\ V.\]

\item $[s,\,ft]\,=\,f[s,\,t]+\phi(s)(f)t$ for all locally defined holomorphic sections $s,\,t$ of $V$ and
all locally defined holomorphic functions $f$ on $X$. 
\end{itemize}
The homomorphism $\phi$ is called the {\it anchor map}. The Lie
algebroid $\mathcal{L}$ is called {\it $\Gamma$--equivariant} if the following conditions hold:
\begin{enumerate}
\item The underlying holomorphic vector bundle $V$ is a $\Gamma$--equivariant holomorphic vector bundle,

\item the $\mathbb{C}$--Lie algebra structure $[-,\, -]\, :\, V\otimes_{\mathbb{C}}V\,
\longrightarrow\, V$ is $\Gamma$--equivariant, and

\item the anchor map $\phi$ is $\Gamma$--equivariant for the action of $\Gamma$ on $TX$ given by the
action of $\Gamma$ on $X$.
\end{enumerate}

\subsection{Lie algebroid connections on equivariant principal bundles}\label{2.3}

Let $(V,\,\phi)$ be a $\Gamma$--equivariant holomorphic Lie algebroid on $X$ with $X$ as in Section \ref{2.2}.

As before, $E_{G}$ is $\Gamma$--equivariant holomorphic principal $G$--bundle on $X$.
We have the equivariant Atiyah exact sequence
\begin{equation}\label{Ateq}
0\, \longrightarrow\, \ad(E_{G})\,
\xrightarrow{\,\,\,\iota_{G}\,\,\,} \, \At(E_{G})
\,\xrightarrow{\,\,\,\omega_{G}\,\,\,}\, TX\,\longrightarrow\, 0
\end{equation}
(see \cite{At}).
Define the homomorphism
$$\psi_{G}\ :\ V\oplus\At(E_{G})\ \longrightarrow\ TX, \ \ \,
(v,\, w)\ \longmapsto\ \phi(v)-\omega_{G}(w),$$
where $\omega_G$ is the projection in \eqref{Ateq} and $\phi$ is the anchor map. Define $\mathcal{A}(E_{G})\,
:=\, {\rm Kernel}(\psi_{G})$. Note that the homomorphism
$\psi_G$ is surjective because $\omega_G$ is so. Consequently, $\mathcal{A}(E_{G})$ is a
holomorphic subbundle of $V\oplus\At(E_{G})$. As $\psi_{G}$ is
$\Gamma$--equivariant, it follows that $\mathcal{A}(E_{G})$ is a $\Gamma$--invariant subbundle of
$V\oplus\At(E_{G})$. Denote by $\pi_1\, :\, V\oplus\At(E_{G})\, \longrightarrow\, V$
and $\pi_2\, :\, V\oplus\At(E_{G})\, \longrightarrow\, \At(E_{G})$ the natural projections. Let
\begin{equation}\label{p} 
\rho_{G}\ :\ \mathcal{A}(E_{G})\ \longrightarrow\ V
\end{equation}
denote the $\Gamma$--equivariant homomorphism of holomorphic vector bundles given by the composition of maps
$$
\mathcal{A}(E_{G})\ \hookrightarrow\ V\oplus \At(E_{G})\ \xrightarrow{\,\,\,\pi_{1}\,\,\,} \ V;
$$
similarly,
\begin{equation}\label{q}
\phi_{G}\ :\ \mathcal{A}(E_{G})\ \longrightarrow\ \At(E_{G})
\end{equation}
denotes the $\Gamma$--equivariant homomorphism of holomorphic vector bundles given by the composition of maps
$$
\mathcal{A}(E_{G})\ \hookrightarrow\ V\oplus\At(E_{G}) \ \xrightarrow{\,\,\,\pi_{2}\,\,\,}\ \At(E_{G}).
$$
Then we have the following commutative diagram of $\Gamma$--equivariant holomorphic vector bundles
\begin{center}
\begin{equation}\label{d}
\begin{tikzpicture}[baseline={([yshift=-0.5ex]current bounding box.center)}]
\matrix(m)[matrix of math nodes,
row sep=2.5em, column sep=3em,
text height=1.5ex, text depth=0.25ex]
{
0 & \ad(E_{G}) & \mathcal{A}(E_{G}) & V & 0 \\
0 & \ad(E_{G}) & \At(E_{G}) & TX & 0,\\
};
\path[->]
(m-1-1) edge node[above] {} (m-1-2);
\path[->]
(m-1-2) edge node[above] {$j_{G}$} (m-1-3);
\path[->]
(m-1-3) edge node[above] {$\rho_{G}$} (m-1-4);
\path[->]
(m-1-4) edge node[above] {} (m-1-5);
\path[->]
(m-2-1) edge node[above] {} (m-2-2);
\path[->]
(m-2-2) edge node[below] {$\iota_{G}$} (m-2-3);
\path[->]
(m-2-3) edge node[below] {$\omega_{G}$} (m-2-4);
\path[->]
(m-2-4) edge node[above] {} (m-2-5);
\path[->]
(m-1-3) edge node[right] {$\phi_{G}$} (m-2-3);
\path[->]
(m-1-2) edge node[right] {${\rm Id}$} (m-2-2);
\path[->]
(m-1-4) edge node[right] {$\phi$} (m-2-4);
\end{tikzpicture}
\end{equation}
\end{center}
where \begin{equation}\label{r}
j_{G}\ =\ (0,\, \iota_{G})
\end{equation}
(see \eqref{Ateq}); note that both the rows in \eqref{d} are exact.

An {\it equivariant holomorphic $(V,\,\phi)$--connection on $E_{G}$} is a $\Gamma$--equivariant holomorphic
homomorphism of vector bundles
$$\delta\ :\ V\ \longrightarrow\ \mathcal{A}(E_{G})$$
such that $\rho_{G}\circ \delta\,=\,{\rm Id}_{V}$, where $\rho_G$ is the homomorphism in \eqref{d}.

\begin{remark}\label{rem}
An equivariant holomorphic $(V,\, \phi)$--connection on $E_{G}$ can also be equivalently viewed as a map of
equivariant holomorphic vector bundles $\delta^{'}\, :\,V\,\longrightarrow\, \At(E_{G})$ such that
$\omega_{G}\circ\delta^{'}\,=\,\phi$. See \cite{Bi1}, \cite[Lemma 2.2]{BBK}.
\end{remark}

\begin{remark}\label{rem0}
Giving a $\Gamma$--equivariant holomorphic homomorphism of vector bundles
$$\delta\ :\ V\ \longrightarrow\ \mathcal{A}(E_{G})$$ such that $\rho_{G}\circ \delta
\,=\,{\rm Id}_{V}$, where $\rho_G$ is the homomorphism in \eqref{d} is equivalent to giving a $\Gamma$--equivariant homomorphism
$$
\delta'\ :\ \mathcal{A}(E_{G}) \ \longrightarrow\ {\rm ad}(E_G)
$$
such that $\delta'\circ j_{G}\,=\, {\rm Id}_{{\rm ad}(E_G)}$, where $j_G$ is the injective homomorphism
in \eqref{d}. Given any $\delta'$ satisfying this condition, there is a unique $\Gamma$--equivariant homomorphism
$$\delta\ :\ V\ \longrightarrow\ \mathcal{A}(E_{G})$$
satisfying the following two conditions:
\begin{enumerate}
\item $\delta (V)\ =\ {\rm kernel}(\delta')$, and

\item $\rho_{G}\circ \delta\,=\,{\rm Id}_{V}$.
\end{enumerate}

Conversely, given any $\Gamma$--equivariant homomorphism $\delta\ :\ V\ \longrightarrow\ \mathcal{A}(E_{G})$
such that $\rho_{G}\circ \delta\,=\,{\rm Id}_{V}$, there is a unique $\Gamma$--equivariant homomorphism
$$
\delta'\ :\ \mathcal{A}(E_{G}) \ \longrightarrow\ {\rm ad}(E_G)
$$
satisfying the following two conditions:
\begin{enumerate}
\item $\delta (V)\ =\ {\rm kernel}(\delta')$, and

\item $\delta'\circ j_{G}\,=\, {\rm Id}_{{\rm ad}(E_G)}$, where $j_G$ is the injective homomorphism in $(\ref{d})$.
\end{enumerate}
\end{remark}

\subsection{Nilpotent radical bundle}\label{s2.4}

As before, $\mathfrak p$ denotes the Lie algebra of the parabolic subgroup $P\, \subset\, G$. The
nilpotent radical of $\mathfrak p$ will be denoted by $\mathfrak{n}$; it is the Lie algebra of the
unipotent radical $R_{u}(P)$ of $P$.

Since $G$ is reductive, there are non-degenerate symmetric $G$--invariant bilinear
forms on $\mathfrak{g}$. 
Fix a non-degenerate symmetric $G$--invariant bilinear form
$$
{\mathbb B}\ \, \in\ \, \text{Sym}^2(\mathfrak{g}^*)^G.
$$
Evidently $\mathbb B$ induces an isomorphism of $G$--modules
\begin{equation}\label{i}
\mathfrak{g}\ \overset{\cong}{\longrightarrow}\ \mathfrak{g}^{*}.
\end{equation}
Then it is straightforward to check that with respect to $\mathbb B$,
\[\mathfrak{p}^{\perp}\ =\ \mathfrak{n}.\]
Thus, using the isomorphism in \eqref{i} we see that 
\begin{equation}\label{is}
\mathfrak{g}/\mathfrak{p}\ =\ (\mathfrak{p}^{\perp})^*\ = \ \mathfrak{n}^{*}.
\end{equation}

Take a reduction $E_P$ as in \eqref{ep}. Let $\mathcal{R}(E_{P})\, \longrightarrow\, X$ be the
vector bundle associated to the principal $P$--bundle $E_P$ for the $P$--module $\mathfrak{n}$.
In other words, $\mathcal{R}(E_{P})\,\subset\, \text{ad}(E_P)$ is the nilpotent radical bundle of $E_{P}$,
which means that for each $x\, \in\, X$, the fiber $\mathcal{R}(E_{P})_x\,\subset\, \text{ad}(E_P)_x$ is the
nilpotent radical of $\text{ad}(E_P)_x$.

Using \eqref{is} it follows immediately that there is a canonical isomorphism of $\Gamma$--equivariant holomorphic
vector bundles
\begin{equation}\label{is1}
\mathcal{R}(E_{P})^{*}\ \cong\ \ad(E_{G})/\ad(E_{P}).
\end{equation}
{}From \eqref{is1} it follows that $E_P\, \subset\, E_G$ is $\Gamma$--equivariantly infinitesimally rigid
if
\begin{equation}\label{er2}
H^{0}(X,\ \mathcal{R}(E_{P})^{*})^{\Gamma}\,\ =\,\ 0~(\textrm{see}~ \eqref{er}).
\end{equation}

\section{Parabolic Lie algebroids, principal bundles and connections}\label{third}
Take a compact connected Riemann surface $X$ and fix a subset
$$S\ :=\ \{x_{1},\,x_{2},\,\cdots ,\, x_{n}\} \subset\ X$$
of $n$ distinct points of $X$, with $n\,\geq\, 1$. Let $V$ be a holomorphic vector bundle on $X$. A
{\it parabolic structure} on $V$ over $S$ is given by the following data:
\begin{itemize}
\item For every $x\,\in\, S$, there is a strictly decreasing filtration of subspaces of the fiber $V_x$ of
$V$ at $x$
\[V_{x}\,=\,V^{1}_{x}\,\supsetneq\, V^{2}_{x}\,\supsetneq\,\cdots\,
\supsetneq\, V^{l_{x}}_{x}\,\supsetneq\, V^{l_{x}+1}_{x}\,=\,0.\]
 
\item For every $x\,\in\, S$, there is a sequence of real numbers
\[0\,\leq\, \alpha^{x}_{1}\,\lneq\, \alpha^{x}_{2}\,\lneq\,\cdots\, \lneq\, \alpha^{x}_{l_{x}}\,<\,1.\]
\end{itemize}

A {\it parabolic vector bundle} on $(X,\,S)$, or a {\it parabolic vector bundle with parabolic structure over} 
$S$, is a holomorphic vector bundle $V$ on $X$ equipped with a parabolic structure as above; the notation $V_*$ 
will be used for it.

Let $V_*$, $W_*$ be two parabolic vector bundles on $(X,\,S)$. Then a {\it parabolic homomorphism}
$V_*\, \longrightarrow\, W_*$ is a holomorphic homomorphism of vector bundles
$\phi\,:\,V\,\longrightarrow\, W$ such that for every $x\,\in\, S$,
$$\phi_{x}(V^{i}_{x})\ \subseteq\ W^{j}_{x}$$ whenever $\alpha^{x}_{i}\,\geq\,\beta^{x}_{j}$, where
$\{\alpha^{x}_{k}\}$ (respectively, $\{\beta^{x}_{\ell}\}$) are the parabolic weights
of $V_*$ (respectively, $W_*$) at $x$.

Let $V_*$ be a parabolic vector bundle as above on $(X,\,S)$. Then the parabolic degree of $V_*$ which is
denoted by $\deg_{\rm par}(V_*)$, is defined to be
$$
\deg_{\rm par}(V_*)\ :=\ \deg(V)+\sum_{x\in S}\sum_{1\leq i\leq l_{x}}\alpha^{x}_{i}(\dim(V^{i}_{x})-
\dim(V^{i+1}_{x})),
$$
and the parabolic slope $\mu_{\rm par}(V_*)$ is defined to be 
$$
\mu_{\rm par}(V_*)\ :=\ \frac{\deg_{\rm par}(V_*)}{\rank(V)}.
$$
A parabolic vector bundle $V_*$ on $(X,\, S)$ is called {\it stable} (respectively,
{\it semistable}) if for all nonzero proper holomorphic subbundle $W\, \subset\ V$,
$$\mu_{\rm par}(W_*)\ <\ \mu_{\rm par}(V_*)$$
(respectively, $\mu_{\rm par}(W_*)\ \leq\ \mu_{\rm par}(V_*)$), where
$W_*$ is $W$ equipped with the parabolic structure induced by that of $V_*$.

Take a connected complex reductive affine algebraic group $G$, and also take $(X,\,S)$ as above.
Fix an integer $N_{i}\,\geq\, 2$ for every $1\,\leq\, i\,\leq\, n$. Assume that if $X\,\cong\, \mathbb{P}^{1}$,
then $n\,\geq\, 2$ and if $X\,\cong\, \mathbb{P}^{1}$ and $n\,=\,2$, then $N_{1}\,=\,N_{2}$. 

A {\it parabolic principal $G$--bundle} on $(X,\,S)$ is a complex manifold $\mathcal{E}_{G}$ together with a
holomorphic surjection $p\,:\,\mathcal{E}_{G}\,\longrightarrow\, X$ and a right holomorphic
action $\psi\,:\,\mathcal{E}_{G}\times G\,\longrightarrow\,
 \mathcal{E}_{G}$ of $G$ on $\mathcal{E}_{G}$ satisfying the following conditions:
\begin{itemize}
\item $p(\psi(x,g))\,=\,p(x)$ for all $x\,\in\, \mathcal{E}_{G}$ and $g\,\in\, G$,

\item for every $x\,\in\, X$ the action of $G$ on $p^{-1}(x)$ is transitive,

\item the restriction map $p\,:\,{\mathcal{E}_{G}}\big\vert_{p^{-1}(X\backslash S)}\, \longrightarrow\,
 X\backslash S$ is a holomorphic principal $G$--bundle and
 
\item for any $1\,\leq\, i\,\leq\, n$ and $x\,\in\, p^{-1}(x_{i})$, the stabilizer $G_x$ at $x$ for the
action $\psi$ of $G$ on $\mathcal{E}_{G}$ is a finite cyclic subgroup of $G$ whose order divides $N_{i}$.
\end{itemize}

Take a parabolic principal $G$--bundle $\mathcal{E}_{G}$ on $(X,\,S)$. Let
$\At_{*}(\mathcal{E}_{G})$ be the holomorphic vector bundle on $X$ defined by
\[\At_{*}(\mathcal{E}_{G})\ :=\ (p_{*}T\mathcal{E}_{G})^{G}.\]
It is called the {\it parabolic Atiyah bundle} of $\mathcal{E}_{G}$, and comes 
naturally with a parabolic structure over $S$; the parabolic weights at any $x_i$, $1\, \leq\, i\,
\leq\, n$, are integral multiples of $\frac{1}{N_{i}}$.

Let $T_{p}\,\subseteq\, TE_{G}$ be the relative tangent bundle of the projection $p$. Then the holomorphic 
vector bundle $\ad_{*}(\mathcal{E}_{G})$ on $X$ defined by 
$$
\ad_{*}(\mathcal{E}_{G})\ :=\ (p_{*}T_{p})^{G}
$$
is also naturally a parabolic vector bundle on $X$ with parabolic structure over $S$;
the parabolic weights over any $x_i$, $1\, \leq\, i\,
\leq\, n$, are integral multiples of $\frac{1}{N_{i}}$. The parabolic vector bundle
$\ad_{*}(\mathcal{E}_{G})$ is called the {\it parabolic adjoint bundle} of $\mathcal{E}_{G}$.

Let $\mathbb S$ be the effective divisor on $X$ defined by ${\mathbb S}\,=\,\Sigma_{1\leq i\leq n}x_{i}$. Let
$TX_*$ be the parabolic line bundle on $(X,\,S)$ whose underlying line bundle is
$TX(-{\mathbb S})\,=\, TX\otimes {\mathcal O}_X(-{\mathbb S})$ and the parabolic weight at $x_i$ is
$\frac{1}{N_{i}}$ for every $1\,\leq\, i\,\leq\, n$. 

There is the following natural short exact sequence of parabolic vector bundles:
\begin{equation}\label{Atpar}
0\ \longrightarrow\ \ad_{*}(\mathcal{E}_{G})\ \xrightarrow{\,\,\,{\iota_{G}}_{*}\,\,\,}\ 
\At_{*}(\mathcal{E}_{G})\ \xrightarrow{\,\,\,{\omega_{G}}_{*}\,\,\,}\ TX_{*}\ \longrightarrow\ 0,
\end{equation}
which is known as the {\it parabolic Atiyah exact sequence}.

A {\it parabolic connection} on $E_{G}$ is a splitting of the short exact sequence of parabolic vector bundles 
in \eqref{Atpar} via a holomorphic morphism of parabolic vector bundles. In other words, a parabolic connection
on $E_{G}$ is a holomorphic homomorphism of parabolic vector bundles
$$\delta\ :\ \At_{*}(\mathcal{E}_{G})\ \longrightarrow\ \ad_{*}(\mathcal{E}_{G})$$
such that $\delta\circ {\iota_{G}}_{*}\,=\, {\rm Id}_{\ad_{*}(\mathcal{E}_{G})}$; see Remark
\ref{rem0}.

A {\it parabolic Lie algebroid} is a pair $\mathcal{L}_{*}\,=\, (V_{*},\,\phi_{*})$
where $V_{*}$ is a parabolic vector bundle on $X$ with parabolic structure over $S$
such that the parabolic weights at each $x_i$, $1\, \leq\, i\, \leq\, n$, are integral
multiples of $\frac{1}{N_{i}}$, and $\phi_{*}\,:\,V_{*}\,\longrightarrow\, TX_{*}$ is a parabolic
homomorphism, such that the following two conditions hold: 
\begin{itemize}
\item $V_{*}$ is equipped with a $\mathbb{C}$--Lie algebra structure \[[-,\,-]\,:\,V_{*}\otimes_{\mathbb{C}}V_{*}
\,\longrightarrow\, V_{*}\] which is compatible with the parabolic structure, and

\item $[s,\, ft]\,=\,f[s,\,t]+\phi_{*}(s)(f)t$ for all locally defined holomorphic sections $s,\,t$ of $V$
and locally defined holomorphic functions $f$ on $X$.
\end{itemize}

Take a parabolic Lie algebroid $\mathcal{L}_{*}\,=\,(V_{*},\,\phi_{*})$ and a parabolic principal $G$--bundle 
$E_{G}$ as above. Consider the following surjective homomorphism of parabolic vector bundles
$$
{\psi_{G}}_{*}\ :\ V_{*}\oplus \At_{*}(\mathcal{E}_{G})\ \longrightarrow\ TX_{*},\ \ \,
(v,\,w)\ \longmapsto\ \phi_{*}(v)-{\omega_{G}}_{*}(w),
$$
where ${\omega_{G}}_{*}$ is the projection in \eqref{Atpar}. Define
$$\mathcal{A}_{*}(\mathcal{E}_{G})\ :=\ {\rm kernel}({\psi_{G}}_{*}).$$
Just as in Section \ref{2.3}, we obtain the following short exact sequence of parabolic vector 
bundles on $(X,\,S)$
\begin{equation}\label{par1}
0\ \longrightarrow\ \ad_{*}(\mathcal{E}_{G})\ \xrightarrow{\,\,\,{j_{G}}_{*}\,\,\,}\ 
\mathcal{A}_{*}(\mathcal{E}_{G})\ \xrightarrow{\,\,\,{\rho_{G}}_{*}\,\,\,}\
V_{*}\ \longrightarrow\ 0;
\end{equation}
the above homomorphism ${\rho_{G}}_{*}$ is the composition of homomorphisms 
\[\mathcal{A}_{*}(\mathcal{E}_{G})\ \hookrightarrow\ V_{*}\oplus \At_{*}(\mathcal{E}_{G})
\ \overset{\pi_{1}}\longrightarrow\ V_{*},\]
where $\pi_1$ is the natural projection to the first factor, and
${j_{G}}_{*}\, :=\, (0,\, {\iota_{G}}_{*})$.

A {\it parabolic $\mathcal{L}_{*}$--connection} on $\mathcal{E}_{G}$ is defined to be a splitting of the short 
exact sequence in \eqref{par1} of parabolic vector bundles via a morphism of parabolic vector bundles. In 
other words, a parabolic $\mathcal{L}_{*}$--connection on $\mathcal{E}_{G}$ is a holomorphic homomorphism of 
parabolic vector bundles
$$
\delta\ :\ \mathcal{A}_{*}(\mathcal{E}_{G})\ \longrightarrow\ \ad_{*}(\mathcal{E}_{G})
$$
such that $\delta\circ {j_{G}}_{*}\,=\, {\rm Id}_{\ad_{*}(\mathcal{E}_{G})}$; see Remark $\ref{rem0}$. 

Given $\{N_i\}_{i=1}^n$, there is a compact connected Riemann surface
$Y$, and a ramified Galois covering map $\sigma\,:\, Y\,\longrightarrow\, X$ such that the branch locus of
$\sigma$ is $S$, and for every $1\,\leq\, i\,\leq \,n$, the multiplicity of $\sigma$ at any
$y\,\in\, \sigma^{-1}(x_{i})$ is exactly $N_{i}$
(see \cite[Proposition 1.2.12]{Na}).

Fix a ramified Galois covering map $\sigma\,:\, Y\,\longrightarrow\, X$ satisfying the above condition.
Let
$$
\Gamma\ :=\ \Gal(\sigma)\ =\ \Aut(Y/X)
$$
be the Galois group, so we have $X\,=\, Y/\Gamma$.

The following notation will be used.
\begin{itemize}
\item $\mathscr{C}_{\Gamma}$ denotes the category of $\Gamma$--equivariant principal $G$--bundles on $Y$.

\item $\mathscr{C}$ denotes the category of parabolic principal $G$--bundles on $X$ with parabolic
structure on $S$.

\item $\mathscr{V}_{\Gamma}$ denotes the category of $\Gamma$--equivariant vector bundles on $Y$.

\item $\mathscr{V}$ denotes the category of parabolic vector bundles on $X$, with parabolic
structure over $S$ such that their parabolic weights at any $x_{i}\, \in\, S$ are
integral multiples of $\frac{1}{N_{i}}$. 
\end{itemize}

Let
$$\mathbb{A}_{*},\ \ad_{*}\,\ :\, \ \mathscr{C}\ \longrightarrow\ \mathscr{V}$$
be the functors that send a parabolic principal $G$--bundle to the corresponding parabolic Atiyah bundle and
parabolic adjoint vector bundle respectively. Similarly, the two functors
$$\mathbb{A},\ \ad\ :\ \mathscr{C}_{\Gamma}\ \longrightarrow\ \mathscr{V}_{\Gamma}$$
send a $\Gamma$--equivariant principal $G$--bundle to the corresponding
$\Gamma$--equivariant Atiyah bundle and $\Gamma$--equivariant adjoint bundle respectively.

\begin{theorem}\label{char}\mbox{}
\begin{enumerate}
\item There are natural equivalences of categories $\Phi\,:\,\mathscr{C}_{\Gamma}\,\overset{\sim}\longrightarrow
\, \mathscr{C}$ and $\Psi\,:\,\mathscr{V}_{\Gamma}\,\overset{\sim}\longrightarrow \,\mathscr{V}$. 

\item The following equalities of functors hold:
$$\Psi\circ\mathbb{A}\ = \ \mathbb{A}_{*}\circ\Phi\ \ \text{ and }\ \ \Psi\circ\mathrm{ad}\ =\ \mathrm{ad}_{*}\circ\Phi .$$

\item There is a natural isomorphism $\zeta\,:\,TX_{*}\, \overset{\sim}\longrightarrow\, \Psi(TY)$
given by the differential of $\sigma$. Moreover, for any $E_{G}\,\in\, \mathscr{C}_{\Gamma}$,
$\Psi$ takes the equivariant Atiyah exact sequence for 
$E_{G}$ to the parabolic Atiyah exact sequence for $\Phi(E_{G})\,\in\,\mathscr{C}$.

\item There is an equivalence of categories between the parabolic Lie algebroids on $(X,\,S)$ and the
$\Gamma$--equivariant Lie algebroids on $Y$. 

\item Take $E_{G}\,\in\, \mathscr{C}_{\Gamma}$ and a $\Gamma$--equivariant Lie algebroid
$(V,\,\phi)$ on $Y$. Giving a $\Gamma$--equivariant holomorphic $(V,\,\phi)$--connection on $E_{G}$ is equivalent
to giving a parabolic $(V_{*},\, \phi_{*})$--connection on $\mathcal{E}_{G}\,:=\,\Phi(E_{G})\,\in
\, \mathscr{C}$, where $(V_{*},\, \phi_{*})$ is the parabolic Lie algebroid on $X$ corresponding to
$(V,\, \phi)$.
\end{enumerate}
\end{theorem}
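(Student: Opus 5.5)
This theorem is the standard dictionary between parabolic objects on $(X,\,S)$ and $\Gamma$--equivariant objects on the ramified Galois cover $\sigma\,:\,Y\,\longrightarrow\, X$. The plan is to assemble the known correspondences (Biswas, Balaji--Biswas--Nagaraj, and the Lie algebroid version in \cite{Bi1}, \cite{ABKS}) and check that they are compatible with the Atiyah sequences.

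For (1), I would define $\Psi(W) := (\sigma_*W)^{\Gamma}$, with the parabolic filtration at $x_i$ obtained from the filtration of $\sigma_*W$ by the subsheaves $(\sigma_*(W\otimes\mathcal{O}_Y(-k\,\sigma^{-1}(x_i)_{\rm red})))^{\Gamma}$. The weights are $k/N_i$. The inverse functor sends a parabolic bundle $V_*$ to the $\Gamma$--invariant subsheaf of $\sigma^*V\otimes \mathcal{O}_Y(\ast\,\sigma^{-1}S)$ determined by the weights. The required identities are local: near a point of $\sigma^{-1}(x_i)$ the stabilizer is cyclic of order $N_i$, and one decomposes into characters.

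For principal bundles, set $\Phi(E_G) := E_G/\Gamma$. Outside $S$ this is an honest principal $G$--bundle, because $\Gamma$ acts freely on $Y\setminus\sigma^{-1}(S)$. Over $x_i$, the isotropy $\Gamma_y\cong \mathbb{Z}/N_i$ acts on the fiber $(E_G)_y$ through a homomorphism $\Gamma_y\to G$ defined up to conjugacy. Consequently the $G$--stabilizers of points in $\Phi(E_G)$ are cyclic of order dividing $N_i$, which is exactly the definition of a parabolic principal bundle. The inverse reconstructs $E_G$ as the normalization of the fiber product $\mathcal{E}_G\times_X Y$. Verifying that this is a smooth principal bundle is the main obstacle. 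I would reduce to a local model $\mathbb{D}\times G$ with $\mathbb{Z}/N$ acting via $z\mapsto \zeta z$ and $g\mapsto h(\zeta)g$, and check that the resulting quotient determines $h$ up to conjugacy.

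For (2) and (3), use $T(E_G/\Gamma)=(T E_G)/\Gamma$ away from the ramification, together with $\At(E_G)=(p_*TE_G)^G$. Then $(\sigma_*\At(E_G))^{\Gamma}=\At_*(\mathcal{E}_G)$ on the nose, and likewise for $\ad$ using the relative tangent bundle. The parabolic structures match by the local model computation. For $TY$, a local section $z\,\partial_z$ with $w=z^{N}$ gives $d\sigma(z\,\partial_z) = N w\,\partial_w$. Hence $(\sigma_*TY)^\Gamma = TX(-\mathbb{S})$, and the next invariant jump occurs at weight $1/N_i$, which yields $\zeta$. Exactness of $\Psi$ (invariants of a finite group in characteristic zero) carries the Atiyah sequence to the parabolic Atiyah sequence.

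For (4), the Lie bracket and the anchor on $V$ descend to $\Gamma$--invariant sections of $\sigma_*V$, using that $\Gamma$--equivariance makes the bracket preserve invariants. Conversely, a bracket on $V_*$ compatible with the filtrations lifts by pulling back and extending over $\sigma^{-1}(S)$ by Hartogs-type removability. The anchor goes to $\phi_*\,:\,V_*\to TX_*$ via $\zeta$.

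Finally, (5) follows formally. $\Psi$ is an exact equivalence commuting with direct sums, so it takes $\psi_G$ to ${\psi_G}_*$ and $\mathcal{A}(E_G)$ to $\mathcal{A}_*(\mathcal{E}_G)$, and it identifies the sequence \eqref{d} with \eqref{par1}. Equivariant splittings $\delta'$ as in Remark \ref{rem0} then correspond bijectively to parabolic splittings.
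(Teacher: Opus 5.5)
Your route is the paper's own. The paper cites the literature for (1) and treats (2)--(5) as formal consequences of the exactness of $\Psi=(\sigma_*)^{\Gamma}$. Your outline of (1), your computation of $\Psi(TY)$ in (3), and your formal deduction of (5) are fine. Two of the justifications you add, however, fail as written.

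In (2), the argument via $T(E_G/\Gamma)=(TE_G)/\Gamma$ identifies $(\sigma_*\At(E_G))^{\Gamma}$ with $(p_*T\mathcal{E}_G)^G$ only over $X\setminus S$. Across $x_i$ the identification is false when the isotropy homomorphism $\Gamma_y\to G$ is trivial. Take $E_G=\sigma^*F$ for an ordinary principal $G$--bundle $F$ on $X$, with the obvious $\Gamma$--action. Then $\Phi(E_G)=F$, and $(p_*TF)^G=\At(F)$ surjects onto $TX$. On the other hand, by exactness of $\Psi$ and your own computation in (3), $(\sigma_*\At(E_G))^{\Gamma}$ surjects onto $(\sigma_*TY)^{\Gamma}=TX(-\mathbb{S})$. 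Locally, the horizontal field $\partial_w$ lifts to $N^{-1}z^{1-N}\partial_z$, which has a pole. What is true is that $\Psi(\At(E_G))$ is the sheaf of $G$--invariant vector fields on $\mathcal{E}_G$ tangent to $p^{-1}(S)$. Near $x_i$ this equals $(p_*T\mathcal{E}_G)^G$ exactly when the isotropy is nontrivial: then $p$ is ramified along $p^{-1}(x_i)$, and invariance under the residual cyclic group forces the transversal component of any invariant field to vanish there. The paper's definition of $\At_*(\mathcal{E}_G)$ needs the same logarithmic correction for \eqref{Atpar} to end in $TX_*$. Once you make this explicit, your arguments for (2) and for the Atiyah-sequence part of (3) go through; the statement for $\ad$ is fine as you have it.

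In (4), there is no Hartogs phenomenon on a curve. A section over a punctured disc can have a pole, so the bracket pulled back to $Y\setminus\sigma^{-1}(S)$ does not extend for free. Take local coordinates $w=z^N$ at $x_i$ and at $y\in\sigma^{-1}(x_i)$, and a local frame $\{e_a\}$ of $V$ adapted to the flag at $x_i$, with $e_a$ of weight $m_a/N$ ($0\le m_a<N$). Write $\phi_*(e_a)=c_a\,w\partial_w$. The bundle $W=\Psi^{-1}(V_*)$ then has local frame $\{z^{-m_a}\sigma^*e_a\}$, and the Leibniz rule gives
\[
[z^{-m_a}\sigma^*e_a,\ z^{-m_b}\sigma^*e_b]\ =\ z^{-m_a-m_b}\,\sigma^*\Big([e_a,e_b]-\tfrac{m_b}{N}\,c_a\,e_b+\tfrac{m_a}{N}\,c_b\,e_a\Big).
\]
In general this has a pole along $z=0$ with respect to the frame. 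Lifting the bracket is therefore an order-of-vanishing condition on $[e_a,e_b]$, $c_a$ and $c_b$ relative to the weights. This is what ``compatible with the parabolic structure'' (left imprecise in the paper) has to be taken to mean. The step must be this pole count, not a removability argument. The descent direction of (4) and the transfer of the anchor through $\zeta$ are fine.
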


\begin{proof}
For (1) we refer the reader to the papers \cite{Bi2}, \cite{Bi3}, \cite{Bo1}, \cite{Bo2}, \cite{BBN1}, \cite{BBN2}.
Other statements are straightforward consequences of (1).
\end{proof}

\section{Equivariant Harder--Narasimhan reduction}\label{fourth}

As before, $G$ is a connected complex reductive affine algebraic group with Lie algebra $\mathfrak{g}$. Let 
$E_{G}$ be a holomorphic principal $G$--bundle on a compact connected Riemann surface $X$. A holomorphic 
reduction of structure group $E_{P}$ of $E_{G}$ to a parabolic subgroup $P\, \subset\, G$ is called {\it 
canonical} if the following two conditions hold:
\begin{enumerate}
\item If $L$ is the Levi quotient of $P$, then the corresponding holomorphic principal $L\,:=\,
P/R_u(P)$ bundle $E_{L}\,:=\,
E_{P}/R_u(P)$ is semistable, where $R_u(P)\, \subset\, P$ is the unipotent radical of $P$.

\item Fix a maximal torus $T$ and a Borel subgroup $B$ of $G$ such that $T\, \subset\, B\,\subseteq\, P$.
For any nontrivial character $\mathcal{X}$ of $P$, which is trivial on the center of $G$,
and which is a nonnegative linear combination of simple roots, the associated line bundle
$E_{P}\times^{\mathcal{X}}\mathbb{C}$ is of positive degree.
\end{enumerate}
(See \cite{Ra2}). See \cite[Ch. 13, \S~4]{Sp}, \cite[Ch. IV, \S~11]{Bo} for parabolic subgroups and
Levi quotients.

As in Section \ref{s2.4}, denote by $\mathfrak p$ (respectively, $\mathfrak{n}$)
the Lie algebra of $P$ (respectively, $R_{u}(P)$). The Lie algebra of the Levi quotient
$L\,:=\, P/R_u(P)$ will be denoted by $\mathfrak l$; so
\begin{equation}\label{levi}
\mathfrak{l}\ =\ \mathfrak{p}/\mathfrak{n}.
\end{equation}
Let $\mathcal{S}(P)$ denote the isomorphism classes of irreducible $P$--modules that occur as submodules
of the $P$--module $\mathfrak{n}/[\mathfrak{n},\,\mathfrak{n}]$.

\begin{theorem}[{\cite[Theorem 1]{AAB}}]\label{1.2}
Let $E_{G}$ be a holomorphic principal $G$--bundle over a compact connected Riemann surface $X$.
Then there is a parabolic subgroup $P\, \subset\, G$ unique up to conjugation and
a unique holomorphic reduction of structure group $E_P\, \subset\, E_{G}$ to $P$, such that the following
two conditions hold:
\begin{enumerate}
\item The holomorphic principal $L$--bundle $E_{L}$ obtained by extending the structure group from $E_{P}$
to $L$ via the natural projection $P\,\twoheadrightarrow\, P/R_{u}(P)\,=\,L$ is semistable.

\item For any $V\,\in\, \mathcal{S}(P)$ the associated holomorphic vector bundle $E_{P}\times^{P}V$ on $X$
has positive degree.
\end{enumerate}
\end{theorem}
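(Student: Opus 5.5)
The plan is to derive both existence and uniqueness from the uniqueness of the Harder--Narasimhan filtration of the vector bundle $\ad(E_G)$. The bridge is a grading of $\mathfrak g$ by the centre of the Levi quotient $L$ of $P$.

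\emph{Lie-theoretic setup.} Fix $T\subset B\subset P$ and a Levi subgroup $L\subset P$ containing $T$. Let $Z:=Z(L)^0$. Decompose $\mathfrak{n}/[\mathfrak{n},\mathfrak{n}]=\bigoplus_{\alpha}V_\alpha$, where $\alpha$ runs over the simple roots not in $L$. Each $V_\alpha$ is an irreducible $L$-module on which $Z$ acts by a character $\chi_\alpha$, and these $V_\alpha$ are exactly the members of $\mathcal S(P)$. Since $\mathfrak n$ is generated by $\mathfrak n/[\mathfrak n,\mathfrak n]$, every $Z$-weight occurring in $\mathfrak n$ is a nonzero nonnegative integral combination $\sum m_\alpha\chi_\alpha$. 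Accordingly $\mathfrak g=\bigoplus_\lambda\mathfrak g_\lambda$, with $\mathfrak g_0=\mathfrak l$, $\mathfrak n=\bigoplus_{\lambda>0}\mathfrak g_\lambda$ and $\mathfrak p=\bigoplus_{\lambda\ge0}\mathfrak g_\lambda$.

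\emph{Key slope computation.} Suppose $E_P$ satisfies (1). Then $E_L$ is semistable, so each associated bundle $E_L\times^{L}\mathfrak g_\lambda$ is semistable; this is the characteristic-zero result of Ramanan--Ramanathan. Its slope depends only on $\lambda$: it equals $\deg(E_L\times^{\lambda}\mathbb C)$, because $[L,L]$ has no characters and $Z$ acts by the scalar $\lambda$. Hence this slope is additive in $\lambda$. Writing $d_\alpha=\mu(E_P\times^P V_\alpha)$, condition (2) says $d_\alpha>0$, so the slope $\mu_\lambda=\sum m_\alpha d_\alpha$ is linear in $\lambda$. Now filter $\ad(E_G)$ by the $P$-submodules $F^{t}:=\bigoplus_{\mu_\lambda\ge t}\mathfrak g_\lambda$; these are $P$-stable because $\mathfrak n$ raises weights. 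The graded pieces of this filtration are semistable with strictly decreasing slopes. Therefore it is the Harder--Narasimhan filtration of $\ad(E_G)$, and in particular $\ad(E_P)$ is the sum of the pieces of slope $\ge0$.

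\emph{Uniqueness.} Let $E_P$ and $E'_{P'}$ both satisfy (1) and (2). By the computation above, $\ad(E_P)$ and $\ad(E'_{P'})$ both equal the nonnegative-slope part of the Harder--Narasimhan filtration of $\ad(E_G)$, so they coincide as subbundles. A parabolic subalgebra is its own normalizer, and its normalizer in $G$ is the corresponding parabolic subgroup. So at each point the fibre of $E_P$ is recovered as the set of $e\in E_G$ with $\Ad(e)\mathfrak p=\ad(E_P)_x$. This gives $P'$ conjugate to $P$ and $E'_{P'}=E_P$ after that conjugation.

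\emph{Existence.} Start from the canonical reduction of Ramanathan, given by conditions (1) and (2) of the definition above, and check that it satisfies (2) of the theorem. First, $\det V_\alpha$ is a character of $P$ trivial on the centre of $G$. Second, on $Z$ this character is $\dim(V_\alpha)\chi_\alpha$, which pairs positively with the fundamental coweight dual to $\alpha$ and is a positive rational combination of simple roots. Third, semistability of $E_L$ again makes $E_P\times^P V_\alpha$ semistable. Hence $\deg(E_P\times^P V_\alpha)=\deg\det(E_P\times^PV_\alpha)>0$. Alternatively, one can avoid Ramanathan altogether: take $F\subset\ad(E_G)$ to be the nonnegative-slope Harder--Narasimhan piece, show it is a parabolic subalgebra bundle using $\mu_{\max}(A\otimes B)=\mu_{\max}A+\mu_{\max}B$ and the $\mathbb B$-duality $\mathfrak p^\perp=\mathfrak n$, and reverse the argument of the key slope computation.

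\emph{Expected obstacle.} The main difficulty is the step identifying $\det V_\alpha$ restricted to $Z$ with a \emph{positive} combination of the characters appearing in Ramanathan's condition. This is where the root-theoretic bookkeeping enters: one must pair the $Z$-weights with the fundamental coweights and use that the Cartan matrix has positive inverse. I would first settle this for $G=\mathrm{GL}_n$, where everything reduces to the usual filtration of a vector bundle, and then pass to general $G$ via the adjoint representation.
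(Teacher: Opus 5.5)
Your proposal is correct, but it is organised differently from what the paper relies on. The paper does not prove this theorem; it quotes it from \cite{AAB}. The route it has in mind, visible in Section~\ref{fourth}, goes through the Harder--Narasimhan filtration of $\ad(E_G)$: $E_0$ is a parabolic subalgebra bundle \cite[\S~10]{AB3} with nilpotent radical $E_{-1}$, and the reduction is the section $x\mapsto zg^z_xP$ of \cite[Lemma 2.11]{AB2}.

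Your uniqueness argument supplies the converse direction of this picture, which the paper does not spell out. Using the $Z(L)^0$--grading and Ramanan--Ramanathan, any reduction satisfying (1) and (2) induces the Harder--Narasimhan filtration of $\ad(E_G)$, so $\ad(E_P)=E_0$, and $N_G(\mathfrak p)=P$ then recovers $E_P$. This is sound. One small correction: the slope of $E_L\times^L\mathfrak g_\lambda$ is the degree of $\lambda$ only in the rational sense, via $X(L)\otimes\mathbb Q\cong X(Z)\otimes\mathbb Q$, because $\lambda$ need not extend to a character of $L$.

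For existence, your main route imports Ramanathan's canonical reduction, while your ``alternative'' is the paper's route. In the alternative, the real input is the fibrewise fact that $E_0$ is parabolic, which you only gesture at and which the paper also cites. After that, (1) follows because $\ad(E_L)\cong E_0/E_{-1}$ is semistable, and (2) follows because $E_P\times^PV_\alpha$ is a quotient of $E_{-1}$, all of whose Harder--Narasimhan slopes are positive.

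The step you single out as the main obstacle is not one. On $T$, $\det V_\alpha$ is the sum of the roots of $V_\alpha$, each positive with $\alpha$--coefficient $1$. So it is a nontrivial nonnegative integral combination of simple roots, and it is trivial on the centre of $G$. Ramanathan's condition then gives $\deg(E_P\times^PV_\alpha)=\deg\det(E_P\times^PV_\alpha)>0$ directly. You need no coweights, no inverse Cartan matrix, and no semistability of $E_P\times^PV_\alpha$.

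As for what each route buys: the Ramanathan route is short but non-constructive, and it leans on an existence theorem of comparable depth. The paper's route describes $E_P$ explicitly through $E_0$, which is what Lemma~\ref{leme} uses to get $\Gamma$--equivariance. Your uniqueness statement would give that equivariance even more cheaply, since for every $\gamma\in\Gamma$ the reduction $\gamma(E_P)$ again satisfies (1) and (2) and hence equals $E_P$.
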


We refer the reader to \cite{Ra1} for the notion of semistability of principal bundles where it was first 
introduced. The holomorphic reduction in Theorem \ref{1.2} is called the {\it Harder--Narasimhan reduction}, 
which evidently is canonical (an explicit construction of it appears in \cite[Lemma 2.11]{AB2}).

Let $X$ be a compact connected Riemann surface and $\Gamma\,\subseteq\, \Aut(X)$ be a finite subgroup. Let 
$E_{G}$ be a $\Gamma$--equivariant holomorphic principal $G$--bundle on $X$. It will be shown that the 
Harder--Narasimhan reduction of $E_{G}$ is $\Gamma$--equivariant. The action of $\Gamma$ on $E_G$ produces an 
action of $\Gamma$ on the adjoint vector bundle ${\rm ad}(E_G)$. Let
\begin{equation}\label{hn1}
0\,=\,F_{0}\,\subsetneq\, F_{1}\,\subsetneq\, \cdots \,\subsetneq\, F_{\ell-1}\,\subsetneq\, F_{\ell}
\,=\, {\rm ad}(E_G)
\end{equation}
be the Harder--Narasimhan filtration of ${\rm ad}(E_G)$. From the uniqueness of the Harder--Narasimhan
filtration it follows immediately that the action of $\Gamma$ on ${\rm ad}(E_G)$ preserves the
filtration in \eqref{hn1}.

As noted before, the Lie algebra $\mathfrak g$ admits a nondegenerate symmetric $G$--invariant
bilinear form. Fix a nondegenerate symmetric $G$--invariant
bilinear form $\mathbb{B}$ on $\mathfrak{g}$. $\mathbb{B}$ produces a nondegenerate symmetric bilinear form on
${\rm ad}(E_G)$; this nondegenerate symmetric bilinear form on ${\rm ad}(E_G)$ is
denoted by $\widetilde{\mathbb{B}}$. Note that for a subbundle $F\, \subset\, {\rm ad}(E_G)$, we have
\begin{equation}\label{perp}
F^{\perp}\ =\ ({\rm ad}(E_G)/F)^{*}.
\end{equation}

Define $W_{j}\,:=\,F_{\ell-j}^{\perp}$ for all $0\,\leq\, j\,\leq\, \ell$. We have
the following filtration of $\Gamma$--equivariant holomorphic subbundles of ${\rm ad}(E_G)$:
\begin{equation}\label{hn2}
0\,=\,W_{0}\,\subsetneq\, W_{1}\,\subsetneq\,\cdots\, \subsetneq\, W_{\ell-1}\,\subsetneq\, W_{\ell}
\,=\, {\rm ad}(E_G).
\end{equation}
Note that using \eqref{perp} and \eqref{hn1} it follows that \eqref{hn2} is the Harder--Narasimhan filtration of
${\rm ad}(E_G)$, and hence from the uniqueness of the Harder--Narasimhan filtration it
follows that the filtrations in \eqref{hn1} and \eqref{hn2} actually coincide. In other words, we have
$F_{j}\,=\,F_{\ell-j}^{\perp}$ for all $0\,\leq\, j\,\leq\, \ell$. Thus the Harder--Narasimhan filtration of
${\rm ad}(E_G)$ is of the form 
\begin{equation}\label{hn3}
0\,=\,E_{-l-1}\,\subsetneq\, E_{-l}\,\subsetneq\,\cdots,\, \subsetneq\, E_{-1}\,\subsetneq\, E_{0}
\,\subsetneq\,\cdots,\, \subsetneq\, E_{l-1}\,\subsetneq\, E_{l}\,=\, {\rm ad}(E_G)
\end{equation}
for some $l\,\geq\, 0$.

Assume that ${\rm ad}(E_G)$ is not semistable, or equivalently, the principal $G$--bundle $E_{G}$ is not
semistable (see \cite[Proposition 2.10]{AB2} for the equivalence of these statements).
Thus, we have $l\,\geq\, 1$ in \eqref{hn3}. Moreover, $E_{-j}^{\perp}\,=\,E_{j-1}$ for all $-l\,\leq\, j\,
\leq \, l+1$.

Note that since $E_{-1}^{\perp}\,=\,E_{0}$, the bilinear form $\widetilde{\mathbb{B}}$ on ${\rm ad}(E_G)$
induces a nondegenerate symmetric bilinear form on the quotient bundle $E_{0}/E_{-1}$, which, in turn,
produces an isomorphism
$$E_{0}/E_{-1}\ \cong\ (E_{0}/E_{-1})^{*},$$ in particular, we have
${\rm degree}(E_{0}/E_{-1})\,=\,0$.

Consider the homomorphism
\begin{equation}\label{ephi}
\phi\ :\ E_{0}\otimes E_{0}\ \longrightarrow\ {\rm ad}(E_G)/E_{0}
\end{equation}
defined by the composition of maps
\[E_{0}\otimes E_{0}\ \hookrightarrow\ {\rm ad}(E_G)\otimes {\rm ad}(E_G)\ \xrightarrow{\,\,\,[-,\,-]\,\,\,}\
{\rm ad}(E_G)\ \twoheadrightarrow\ {\rm ad}(E_G)/E_{0}.\]
Note that $$\mu_{min}(E_{0}\otimes E_{0})\ =\ 2\mu_{min}(E_{0})\ =\ \mu(E_{0}/E_{-1})\ =\ 0
$$
$$
>\ \mu(E_{1}/E_{0})\ =\ \mu_{max}({\rm ad}(E_G)/E_{0}).$$
Since $\mu_{min}(E_{0}\otimes E_{0})\, >\, \mu_{max}({\rm ad}(E_G)/E_{0})$, it follows immediately
that $\phi\,=\, 0$ (see \eqref{ephi}).
Thus, we have $[E_{0},\, E_{0}]\,\subseteq\, E_{0}$. In fact $E_{0}$ is a parabolic subalgebra bundle of
${\rm ad}(E_G)$ \cite[\S~10]{AB3}. 

Similarly, one argues that for all $-l\, \leq\, j\,\leq\, l$,
\begin{equation}\label{c}
[E_{-j},\ E_{-1}]\,\ \subseteq\,\ E_{-j-1}.
\end{equation}

Recall that ${\rm ad}(E_G)$ is the vector bundle associated to the principal $G$--bundle $E_G$ for the
adjoint action of $G$ on its Lie algebra $\mathfrak g$. This means that ${\rm ad}(E_G)$ is the quotient 
of $E_G\times {\mathfrak g}$, where two elements $(z_1,\, v_1),\, (z_2,\, v_2)\, \in\, E_G\times {\mathfrak g}$
are identified if there is an element $\gamma\, \in\, G$ such that $z_2\,=\, z_1\gamma$ and $v_2\,=\,
\text{Ad}(\gamma^{-1})(v_1)$. So fixing $x\, \in\, X$ and $z\, \in\, (E_{G})_{x}$, we get an isomorphism
$$\phi^{z}_{x}\ :\ \mathfrak{g}\ \overset{\sim}\longrightarrow\ {\rm ad}(E_G)_x$$
that sends any $v\, \in\, \mathfrak g$ to the equivalence class of $(z,\, v)$.

Let $\mathfrak{p}^{z}_{x}$ be the parabolic subalgebra $(\phi^{z}_{x})^{-1}((E_{0})_{x})
\, \subset\, \mathfrak{g}$. As $E_G$ is connected, the conjugacy class of the parabolic
subalgebra $(\phi^{z}_{x})^{-1}((E_{0})_{x})\, \subset\, \mathfrak{g}$ is independent of $z$ and $x$.

Fix a parabolic subalgebra
$$
\mathfrak{p}\ \subset\ \mathfrak{g}
$$
in the conjugacy class of $(\phi^{z}_{x})^{-1}((E_{0})_{x})$.

Let $x\,\in\, X$ and $z\,\in\, (E_{G})_{x}$. Choose $g^{z}_{x}\,\in\, G$ such that
$\Ad(g^{z}_{x})(\mathfrak{p})\,=\, \mathfrak{p}^{z}_{x}$ (which is defined above). Let
$P\, \subset\, G$ (respectively, $P^{z}_{x}\, \subset\, G$) be the parabolic subgroup whose
Lie algebra is $\mathfrak p$ (respectively, $\mathfrak{p}^{z}_{x}$).
Then clearly we have $g^{z}_{x}P{g^{z}_{x}}^{-1}\,=\,P^{z}_{x}$.

The Harder--Narasimhan reduction is constructed in \cite[Lemma 2.11]{AB2} using the section
\begin{equation}\label{es}
\sigma\ :\ X\ \longrightarrow\ E_{G}/P
\end{equation}
defined by $\sigma(x)\,=\, zg^{z}_{x}P$.
 
\begin{lemma}\label{leme}
The section $\sigma$ in \eqref{es} is $\Gamma$--equivariant.
\end{lemma}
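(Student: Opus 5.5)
Our plan is to reduce the equivariance of $\sigma$ to the $\Gamma$--invariance of the subbundle $E_{0}\,\subset\,{\rm ad}(E_G)$, which we already know. First we make the meaning precise. The action of $\Gamma$ on $E_G$ commutes with the right action of $G$, so it descends to an action on $E_G/P$ covering the action on $X$. The claim is that $\sigma(\gamma x)\,=\,\gamma\cdot\sigma(x)$ for all $\gamma\,\in\,\Gamma$ and $x\,\in\, X$.

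Next we characterize $\sigma(x)$ intrinsically. A point $zgP\,\in\,(E_G/P)_x$ determines the parabolic subalgebra $\phi^{z}_{x}(\Ad(g)(\mathfrak p))\,\subset\,{\rm ad}(E_G)_x$. This is well defined: replacing $g$ by $gq$ with $q\,\in\, P$ does not change it since $\Ad(q)\mathfrak p\,=\,\mathfrak p$, and replacing $(z,\,g)$ by $(zh,\,h^{-1}g)$ does not change it since $\phi^{zh}_{x}\,=\,\phi^{z}_{x}\circ\Ad(h)$. Since a parabolic subgroup is its own normalizer and is determined by its Lie algebra, this gives a bijection between $(E_G/P)_x$ and the set of parabolic subalgebras of ${\rm ad}(E_G)_x$ conjugate to $\mathfrak p$. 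By construction, $\sigma(x)$ corresponds to $(E_0)_x$. In particular $\sigma$ is independent of the choices of $z$ and $g^{z}_{x}$.

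Then we check compatibility with $\Gamma$. For $\gamma\,\in\,\Gamma$, the map $\gamma\,:\,(E_G)_x\,\to\,(E_G)_{\gamma x}$ commutes with the $G$--action. Hence the induced map ${\rm ad}(E_G)_x\,\to\,{\rm ad}(E_G)_{\gamma x}$ satisfies $\gamma\circ\phi^{z}_{x}\,=\,\phi^{\gamma z}_{\gamma x}$. So $\gamma$ sends the subalgebra attached to $zgP$ to the subalgebra attached to $\gamma(z)gP\,=\,\gamma\cdot(zgP)$. The filtration \eqref{hn3} is $\Gamma$--invariant, as shown via uniqueness of the Harder--Narasimhan filtration, so $\gamma((E_0)_x)\,=\,(E_0)_{\gamma x}$. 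Therefore $\gamma\cdot\sigma(x)$ and $\sigma(\gamma x)$ correspond to the same parabolic subalgebra of ${\rm ad}(E_G)_{\gamma x}$, and by the bijection they are equal.

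The main obstacle is the injectivity of the correspondence in the second step. It rests on the fact that the normalizer of $\mathfrak p$ in $G$ is exactly $P$, which holds because $G$ is connected and parabolic subgroups are self-normalizing. Everything else is formal bookkeeping with the identifications $\phi^{z}_{x}$.
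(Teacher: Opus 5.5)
Your proof is correct and follows essentially the same route as the paper. Both arguments rest on three facts: the identity $\phi^{\gamma z}_{\gamma x}=T^{x}_{\gamma}\circ\phi^{z}_{x}$, the $\Gamma$--invariance of $E_0$, and the fact that $P$ is the normalizer of $\mathfrak p$ in $G$. The paper phrases the last step as $(g^{z}_{x})^{-1}g^{\gamma z}_{\gamma x}\in P$. You instead package it as a bijection between $(E_G/P)_x$ and the parabolic subalgebras of ${\rm ad}(E_G)_x$ conjugate to $\mathfrak p$; this has the small advantage of making explicit that $\sigma$ does not depend on the choices of $z$ and $g^{z}_{x}$, which the paper uses without comment.
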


\begin{proof}
Take any $\gamma \,\in\, \Gamma$. Let $T^{x}_{\gamma}\,:\,\text{ad}(E_G)_{x}\,\overset{\sim}\longrightarrow\,
\text{ad}(E_G)_{\gamma. x}$ be the isomorphism of Lie algebras given by the action of $\gamma$
on $\text{ad}(E_G)$ (which is induced by the action of $\gamma$ on $E_G$).
We have the identity \[\phi^{\gamma.z}_{\gamma.x}\ =\ T^{x}_{\gamma}\circ\phi^{z}_{x}.\] This, along with the
fact that $E_{0}$ is a $\Gamma$--equivariant subbundle of $\text{ad}(E_G)$, together imply
that \[\mathfrak{p}^{\gamma.z}_{\gamma.x}\ =\ {\phi^{\gamma.z}_{\gamma.x}}^{-1}((E_{0})_{\gamma.x})
\ =\ {\phi^{z}_{x}}^{-1}((E_{0})_{x})\ =\ \mathfrak{p}^{z}_{x}.\] Thus, we have
$$\Ad(g^{z}_{x})(\mathfrak{p})\,=\,\mathfrak{p}^{z}_{x}\,=\,\mathfrak{p}^{\gamma.z}_{\gamma.x}
=\Ad(g^{\gamma.z}_{\gamma.x})(\mathfrak{p}),$$ and hence $${g^{z}_{x}}^{-1}g^{\gamma.z}_{\gamma.x}\
\in\ P.$$ So, $\sigma(\gamma.x)\,=\,(\gamma.z).g^{\gamma.z}_{\gamma.x}P\,=\,(\gamma.z).g^{z}_{x}P
\,=\,\gamma.\sigma(x)$.
\end{proof}

{}From Lemma \ref{leme} it follows that the Harder--Narasimhan reduction $E_{P}\,\subseteq\, E_{G}$ is
in fact $\Gamma$--equivariant. From \eqref{c} it follows that $E_{-1}$ is a nilpotent ideal of $E_{0}$. Since
$E_{0}/E_{-1}$ has a nondegenerate symmetric bilinear form, we conclude that $E_{-1}$ is the nilpotent radical
of $E_{0}$. Therefore, since $E_{-1}^{\perp}\,=\,E_{0}$,
we see that $\mathfrak{p}^{\perp}$ is the nilpotent radical of $\mathfrak{p}$.

Note that there are natural isomorphisms of $\Gamma$--equivariant holomorphic
vector bundles $$E_{P}\times_{P}\mathfrak{p}^{\perp}\ \cong\ E_{-1},\ \ \ad(E_{P})
\ \cong\ E_{0},\ \ \ad(E_{G})/\ad(E_{P})\ \cong\ \text{ad}(E_G)/E_{0}.$$
Since $E_{G}$ is not semistable, we have $\mu({\rm ad}(E_G)/E_{0})\,<\,0$ and hence
$$H^{0}(X,\, \ad(E_{G})/\ad(E_{P}))\ =\ 0$$ (see \cite[Corollary 1]{AAB}).
Thus we have the following:

\begin{proposition}\label{3.3}
Let $X$ be a compact connected Riemann surface and $\Gamma\,\subseteq\, \emph{Aut}(X)$ a finite subgroup. Let
$G$ be a connected complex reductive affine algebraic group. Let $E_{G}$ be a non-semistable
$\Gamma$--equivariant holomorphic principal $G$--bundle on $X$. Then the Harder--Narasimhan reduction
$E_{P}\,\subset \,E_{G}$ of Theorem \ref{1.2} is $\Gamma$--equivariant and infinitesimally rigid, i.e.,
$H^{0}(X,\,\emph{ad}(E_{G})/\emph{ad}(E_{P}))\,=\,0$. 
\end{proposition}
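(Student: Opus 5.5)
The plan has two parts: first show that the Harder--Narasimhan reduction is $\Gamma$--equivariant, and then show that it is infinitesimally rigid. Both parts only need to assemble facts already established in this section.

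\emph{Equivariance.} Since the Harder--Narasimhan filtration of $\ad(E_G)$ is unique and $\Gamma$ acts on $\ad(E_G)$ by automorphisms of the bundle, each pullback $\gamma^*F_j$ is again a term of the filtration. Hence every $F_j$, and in particular the parabolic subalgebra bundle $E_0$ of \eqref{hn3}, is $\Gamma$--invariant.

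The reduction $E_P$ is the one determined by the section $\sigma$ of $E_G/P$ in \eqref{es}, following \cite[Lemma 2.11]{AB2}. By Lemma \ref{leme} this section is $\Gamma$--equivariant. Its inverse image $E_P \subset E_G$ under $E_G \to E_G/P$ is therefore preserved by $\Gamma$. Because the $\Gamma$ and $G$ actions commute, $E_P$ is a $\Gamma$--equivariant reduction. By Theorem \ref{1.2} and its uniqueness statement, this reduction is the Harder--Narasimhan reduction.

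\emph{Infinitesimal rigidity.} We use the identifications $\ad(E_P)\cong E_0$ and $\ad(E_G)/\ad(E_P)\cong \ad(E_G)/E_0$. These hold because, fiberwise, $\ad(E_P)_x$ is the image of $\mathfrak p$ under $\phi^{zg}_x$, which is exactly $(E_0)_x$.

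The quotient $\ad(E_G)/E_0$ is filtered by $E_{j}/E_{j-1}$ for $1\le j\le l$. Each of these graded pieces is semistable of slope $\mu(E_j/E_{j-1})<\mu(E_0/E_{-1})=0$, using the strict decrease of slopes along the Harder--Narasimhan filtration and $\deg(E_0/E_{-1})=0$. Hence $\mu_{\max}(\ad(E_G)/E_0)<0$.

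A nonzero section of a vector bundle gives a map $\mathcal O_X \to W$, which forces $\mu_{\max}(W)\ge 0$. So $\ad(E_G)/E_0$ has no nonzero sections, which is the content of \cite[Corollary 1]{AAB}. Therefore $H^0(X,\ad(E_G)/\ad(E_P))=0$, and in particular the $\Gamma$--invariant part vanishes, giving \eqref{er}.

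The main point needing care is the identification of $E_P$, built from $E_0$, with the reduction of Theorem \ref{1.2}. This is where I would invoke \cite[Lemma 2.11]{AB2} together with the uniqueness in Theorem \ref{1.2}. The needed inputs are that the nilpotent radical of $E_0$ is $E_{-1}=E_0^{\perp}$, which follows from \eqref{c} and the nondegeneracy of the form on $E_0/E_{-1}$, and that $\mathfrak p^\perp$ is the nilradical of $\mathfrak p$.
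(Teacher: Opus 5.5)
Your proposal is correct and follows essentially the same route as the paper: $\Gamma$--invariance of the Harder--Narasimhan filtration of $\ad(E_G)$ by uniqueness, $\Gamma$--equivariance of the section $\sigma$ of $E_G/P$ via Lemma \ref{leme}, with the resulting reduction identified as the Harder--Narasimhan one through \cite[Lemma 2.11]{AB2}, and then rigidity from the isomorphism $\ad(E_G)/\ad(E_P)\cong \ad(E_G)/E_0$. Your explicit argument, that $\mu_{\max}(\ad(E_G)/E_0)=\mu(E_1/E_0)<0$ rules out nonzero sections, spells out what the paper delegates to \cite[Corollary 1]{AAB}, and it is the precise form of the paper's remark that $\mu(\ad(E_G)/E_0)<0$.
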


\section{Criterion for equivariant holomorphic Lie algebroid connection}\label{final}

The notation of Section \ref{fourth} will be used. In the following theorem, ``equivariant" means 
$\Gamma$--equivariant.

\begin{theorem}\label{4.1}
Take a compact connected Riemann surface $X$ and also a connected complex reductive affine algebraic group
$G$. Let $E_{G}$ be an equivariant holomorphic principal $G$--bundle on $X$ equipped
with an equivariant holomorphic reduction of structure group of $E_{P}\,\subset\, E_{G}$, to some parabolic
subgroup $P\, \subset\, G$, that is equivariantly infinitesimally rigid. Take any equivariant holomorphic
Lie algebroid $(V,\,\phi)$ on $X$ that satisfies the condition that the anchor map $\phi$ is not
surjective. Then
the holomorphic principal $P$--bundle $E_{P}$ admits an equivariant holomorphic $(V,\,\phi)$--connection.
\end{theorem}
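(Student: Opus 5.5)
We reduce the existence of an equivariant $(V,\phi)$--connection on $E_P$ to the vanishing of an extension class. Write down the equivariant analogue of the diagram \eqref{d} for $E_P$:
$$0\longrightarrow \ad(E_P)\xrightarrow{\,j_P\,}\mathcal{A}(E_P)\xrightarrow{\,\rho_P\,}V\longrightarrow 0 .$$
By Remark \ref{rem0}, an equivariant connection exists if and only if this sequence splits $\Gamma$--equivariantly. Since $\Gamma$ is finite, averaging over $\Gamma$ turns any holomorphic splitting into an equivariant one. Equivalently, the obstruction lies in $H^1(X,\,\mathrm{Hom}(V,\ad(E_P)))^{\Gamma}$. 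So it suffices to show that the extension class $\beta_P\in H^1(X,\,V^*\otimes\ad(E_P))$ vanishes.

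\textbf{Step 1: dualize.} By Serre duality, $\beta_P=0$ if and only if $\beta_P$ pairs to zero with every $s\in H^0(X,\,V\otimes \ad(E_P)^*\otimes K_X)$. Using the form $\mathbb B$, we have $\ad(E_P)^*\cong \ad(E_G)/\mathcal{R}(E_P)$. The class $\beta_P$ is the pullback along $\phi$ of the Atiyah class $\mathrm{at}(E_P)\in H^1(X,\,K_X\otimes\ad(E_P))$. Hence the pairing $\langle \beta_P, s\rangle$ equals $\langle \mathrm{at}(E_P),\,(\phi\otimes\mathrm{Id})(s)\rangle$, where
$$(\phi\otimes\mathrm{Id})(s)\ \in\ H^0\big(X,\,TX\otimes K_X\otimes\ad(E_P)^*\big)=H^0\big(X,\,\ad(E_P)^*\big).$$

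\textbf{Step 2: use that $\phi$ is not surjective.} Since $\phi$ is not surjective, it vanishes at some point $x_0$. Therefore $(\phi\otimes\mathrm{Id})(s)$ is a section of $\ad(E_P)^*$ vanishing at $x_0$, i.e.\ a section of $\ad(E_P)^*(-x_0)$.

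\textbf{Step 3: the section lands in $\mathcal{R}(E_P)^*$.} Consider the exact sequence
$$0\to (\ad(E_G)/\ad(E_P))\to \ad(E_G)/\mathcal{R}(E_P)\to \ad(E_P)/\mathcal{R}(E_P)=\ad(E_L)\to 0,$$
in which $\ad(E_G)/\ad(E_P)\cong\mathcal{R}(E_P)^*$. We want the image of $t:=(\phi\otimes\mathrm{Id})(s)$ in $\ad(E_L)$ to pair trivially with the Atiyah class, or better, to vanish. I expect this to be the main obstacle.

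The intended argument is the one of \cite{BBK}. The pairing $\langle \mathrm{at}(E_P),\,t\rangle$ depends only on the image of $t$ in $\ad(E_L)^*\cong\ad(E_L)$, and there it computes $\langle \mathrm{at}(E_L),\,\bar t\rangle$. For reductive $L$, this pairing of the Atiyah class with $\bar t$ is computed through $\mathrm{at}(E_L)$ paired with a section. Via invariant polynomials, or by the Atiyah--Weil type argument of \cite{AB1}, this equals a degree-type quantity $\deg\langle\bar t\rangle$. Such a quantity is independent of the vanishing and hence forced to be $0$: $\bar t$ vanishes at $x_0$, and sections of $\ad(E_L)$ vanishing at a point pair to zero, because the pairing factors through the residue at $x_0$ of the trace form.

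Once the image in $\ad(E_L)$ is disposed of, $t$ comes from a section of $\mathcal{R}(E_P)^*=\ad(E_G)/\ad(E_P)$. One must still take care with $\Gamma$--invariance: by averaging we may assume $s$ is $\Gamma$--invariant, since the obstruction is invariant. Then equivariant infinitesimal rigidity \eqref{er2} gives $t=0$, so $\langle\beta_P,s\rangle=0$.

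\textbf{Step 4: conclude.} Since the pairing vanishes for all invariant $s$, the invariant class $\beta_P$ vanishes. Its projection to invariants is itself, so the extension splits equivariantly. By Remark \ref{rem0}, this gives the required equivariant $(V,\phi)$--connection on $E_P$.
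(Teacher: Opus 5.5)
Your Steps 1, 2 and 4 are fine, and dualizing the obstruction is a legitimate alternative to the paper's route. Step 3, however, fails as written.

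\textbf{The exact sequence is backwards.} Since $\mathcal{R}(E_P)\subset\ad(E_P)\subset\ad(E_G)$, the natural sequence is
\[
0\longrightarrow \ad(E_P)/\mathcal{R}(E_P)\longrightarrow \ad(E_G)/\mathcal{R}(E_P)\longrightarrow \ad(E_G)/\ad(E_P)\longrightarrow 0,
\]
that is, $0\to\ad(E_L)^*\to\ad(E_P)^*\to\mathcal{R}(E_P)^*\to 0$, the dual of $0\to\mathcal{R}(E_P)\to\ad(E_P)\to\ad(E_L)\to 0$. So there is no ``image of $t$ in $\ad(E_L)$''.

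Your claim that $\langle\mathrm{at}(E_P),t\rangle$ depends only on such an image cannot be right. Together with your Levi step, it would prove the theorem with no rigidity hypothesis, which is false. For instance, on $X=\mathbb{P}^1$ with $\Gamma$ trivial, take the $B$--reduction of the trivial $SL_2$--bundle given by $\mathcal{O}(-d)\subset\mathcal{O}^{\oplus 2}$, $d\geq 1$, and $V=TX(-x_0)$. Since $H^0(X,K_X(x_0))=0$, the only $V$--connection on the trivial bundle is the restriction of the trivial connection, and it does not preserve this nonconstant line subbundle.

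The logic is also inconsistent at this point. Showing that the $\ad(E_L)$--component \emph{pairs} to zero with the Atiyah class would not make $t$ come from $\mathcal{R}(E_P)^*$; for that the component itself would have to vanish.

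\textbf{The Levi step is not justified.} With the correct sequence, the two hypotheses enter in the opposite order. For $\Gamma$--invariant $t$, its image in $H^0(X,\mathcal{R}(E_P)^*)^\Gamma$ vanishes by \eqref{er2}. Hence $t$ is a section of the subbundle $\ad(E_L)^*$ vanishing along $D$, where $\phi(V)=TX(-D)$. By functoriality of the Atiyah class under $P\to L$, $\langle\mathrm{at}(E_P),t\rangle=\langle\mathrm{at}(E_L),t\rangle$.

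What remains is exactly the Levi statement, and your justification for it (a ``degree-type quantity'', ``the residue at $x_0$ of the trace form'') is not an argument. You need $\langle\mathrm{at}(E_L),\bar t\,\rangle=0$ for every section $\bar t$ of $\ad(E_L)^*\cong\ad(E_L)$ vanishing along $D\neq 0$. By Serre duality this is the existence of a $(TX(-D),\iota)$--connection on $E_L$. The paper takes this from \cite[Theorem 1.1]{Bi1}, since this Lie algebroid is non-split.

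A direct proof runs as follows.
\begin{enumerate}
\item The invariant polynomials of $\bar t$ are constant on $X$ and vanish at a point of $D$, so $\bar t$ is pointwise nilpotent.
\item Reduce $E_L$ to a parabolic $E_Q$ determined by $\bar t$ at a general point, extended over $X$. Then $\bar t$ lies in $\mathcal{R}(E_Q)$.
\item $\mathrm{at}(E_L)$ comes from $H^1(X,\ad(E_Q)\otimes K_X)$, and $\ad(E_Q)$ is $\mathbb{B}$--orthogonal to $\mathcal{R}(E_Q)$, so the pairing vanishes.
\end{enumerate}

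Once repaired, your argument is the Serre dual of the paper's. The paper first fixes an equivariant $(TX(-D),\iota)$--connection $\delta$ on $E_L$. It then pulls $\delta$ back to the extension $B(\delta)$ of $TX(-D)$ by $\mathcal{R}(E_P)$. Finally it kills the class of $B(\delta)$ in $H^1(X,\mathcal{R}(E_P)\otimes K_X(D))^\Gamma$ using $H^0(X,\mathcal{R}(E_P)^*(-D))^\Gamma\subset H^0(X,\mathcal{R}(E_P)^*)^\Gamma=0$.
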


\begin{proof}
First note that if $\phi\,=\,0$, then the holomorphic vector bundle $\mathcal{A}(E_{P})$
for the equivariant principal $P$--bundle $E_P$ (see \eqref{d}) is ${\rm ad}(E_P)\oplus V$. In that case, the
natural inclusion map
$$
V\ \hookrightarrow\ {\rm ad}(E_P)\oplus V\ =\ \mathcal{A}(E_{P})
$$
define an equivariant holomorphic $(V,\,\phi)$--connection on the equivariant principal $P$--bundle $E_P$.

In view of the above observation it is assumed that $\phi\,\not=\, 0$.

Since $\phi\,\not=\, 0$, there is an effective divisor $D$ on $X$ such that $\phi(V)\,=\,TX(-D)\,:=\,
TX\otimes{\mathcal O}_X(-D)\, \subset\, TX$.
Note that $D$ is nonzero because $\phi$ is not surjective.
Since $\phi$ is equivariant, $\phi(V)\,=\,TX(-D)$ is an equivariant subbundle of $TX$. Consider the
natural inclusion map
\begin{equation}\label{eio}
\iota\ :\ TX(-D)\ \hookrightarrow\ TX.
\end{equation}
So
\begin{equation}\label{ei}
\mathcal{L}\ :=\ (TX(-D),\, \iota)
\end{equation}
is a holomorphic equivariant Lie algebroid.

Let
\begin{equation}\label{xi}
\xi\ :\ E_P\ \longrightarrow\ X
\end{equation}
be the natural projection.
Consider the Atiyah exact sequence for the principal $P$--bundle $E_{P}$: 
\begin{equation}\label{1}
0\ \longrightarrow\ \ad(E_{P})\ \xrightarrow{\,\,\,\iota_{P}\,\,\,}\ \At(E_{P})\ =\
(\xi_* TE_P)^P \ \xrightarrow{\,\,\, \omega_{P}\,\,\,}\ TX\ \longrightarrow\ 0.
\end{equation}
Note that \eqref{1} is equivariant because $E_{P}$ is so. 

Our aim is to show that there exists an equivariant holomorphic ${\mathcal O}_X$--linear map
\begin{equation}\label{de}
\delta^{'}\ :\ TX(-D)\ \longrightarrow\ \At(E_{P})
\end{equation}
such that $\omega_{P}\circ\delta^{'}\,=\,\iota$, where $\omega_{P}$ and $\iota$ are constructed
in \eqref{1} and \eqref{eio} respectively. 

Let $L\,=\, P/R_u(P)$ be the Levi quotient of $P$, where $R_u(P)\, \subset\, P$ is the
unipotent radical. To prove \eqref{de}, first consider the principal $L$--bundle on $X$
\begin{equation}\label{eu}
E_{L}\ :=\ E_P/R_u(P),
\end{equation}
where the action of $R_u(P)$ on $E_P$ is the restriction to $R_u(P)\, \subset\, P$ of the natural
action of $P$ on the principal $P$--bundle $E_P$. We note that $E_L$ coincides with the principal
$L$--bundle on $X$ given by the extension of the structure group of the principal $P$--bundle
$E_P$ for the quotient map
$P\, \longrightarrow\, P/R_u(P)\,=\, L$. The principal $L$--bundle $E_L$ has an equivariant structure
given by the equivariant structure of the principal $P$--bundle $E_P$. More precisely, the
actions of $P$ and $\Gamma$ on $E_P$ commute, and hence the quotient $E_P/R_u(P)$ in \eqref{eu} has
an induced action of $\Gamma$.

Let
\begin{equation}\label{dp}
\pi\ :\ E_{P}\ \longrightarrow\ E_P/R_u(P)\ =\ E_{L}
\end{equation}
be the quotient map. Let $\At(E_{L})$ be the Atiyah bundle for $E_{L}$;
this holomorphic vector bundle $\At(E_{L})$ is equivariant because $E_{L}$ is so. Let
\begin{equation}\label{ol}
\omega_L\ :\ \text{At}(E_L)\ \longrightarrow\ TX
\end{equation}
be the canonical map of equivariant holomorphic vector bundles just as $\omega_{P}$ in \eqref{1}.
As in Section \ref{2.3}, define
\begin{equation}\label{dp2a}
\mathcal{A}(E_P)\ :=\ (\omega_P)^{-1}(TX(-D))\subset\textrm{At}(E_P)
\end{equation}
where $\omega_P$ is the homomorphism in \eqref{1}, and
\begin{equation}\label{dp2b}
\mathcal{A}(E_L)\ :=\ (\omega_L)^{-1}(TX(-D))\subset\textrm{At}(E_L),
\end{equation}
where $\omega_L$ is the homomorphism in \eqref{ol}.

Consider the differential
\begin{equation}\label{ed}
d\pi\ :\ TE_{P}\ \longrightarrow\ \pi^*TE_{L}
\end{equation}
of the projection $\pi$ in \eqref{dp}. The natural action of $P$ on $E_P$ produces an action
of $P$ on $TE_P$. The natural action of $L$ on $E_L$ and the quotient map $P\, \longrightarrow\, P/R_u(P)\,=\, L$
together produce an action of $P$ on $E_L$; in other words, $P$ acts on $E_P$ through the quotient $L$
of $P$. The map $\pi$ in \eqref{dp} is evidently $P$--equivariant. Therefore, $\pi^*TE_{L}$ in \eqref{ed}
is $P$--equivariant. The differential $d\pi$ in \eqref{ed} is evidently $P$--equivariant.
The actions of $P$ on $TE_{P}$ (respectively, $\pi^*TE_{L}$) produces an action of $P$ on the direct image
$\xi_*TE_{P}$ (respectively, $\xi_*\pi^*TE_{L}$) by the projection $\xi$ in \eqref{xi}. The direct image by $\xi$
\begin{equation}\label{ed2}
\xi_* d\pi\ :\ \xi_* TE_{P}\ \longrightarrow\ \xi_*\pi^*TE_{L}
\end{equation}
is evidently $P$--equivariant. Taking invariants, for the actions of $P$, of \eqref{ed2} we have a homomorphism
\begin{equation}\label{ea0}
d'\pi\ :\ \text{At}(E_P) \ :=\ (\xi_* TE_{P})^P \ \longrightarrow\ (\xi_*\pi^*TE_{L})^P.
\end{equation}

Let
$$
\xi'\ :\ E_L\ \longrightarrow\ X
$$
be the natural projection. It is straightforward to check that there is natural homomorphism
\begin{equation}\label{en}
(\xi'_* TE_L)^L \ \longrightarrow\ (\xi_*\pi^*TE_{L})^P
\end{equation}
because the action of $P$ on $E_L$ factors through the quotient map $P\, \longrightarrow\, L$. From the
fact that the action of $P$ on $E_L$ factors through the quotient map $P\, \longrightarrow\, L$ it also
follows that the homomorphism in \eqref{en} is an isomorphism. Therefore, we have
$$
(\xi_*\pi^*TE_{L})^P \ = \ (\xi'_* TE_L)^L \ = \ \text{At}(E_L).
$$
Consequently, from \eqref{ea0} we have a homomorphism
\begin{equation}\label{ea}
d'\pi\ :\ \text{At}(E_P) \ :=\ (\xi_* TE_{P})^P \ \longrightarrow\ \text{At}(E_L)
\end{equation}
between the Atiyah bundles.

The homomorphism $d'\pi$ in \eqref{ea} restricts to a homomorphism
$$
\gamma\ :\ \mathcal{A}(E_P)\ \longrightarrow\ \mathcal{A}(E_L)
$$
(see \eqref{dp2a} and \eqref{dp2b}). The homomorphism $d'\pi$ in \eqref{ea} restricts to a homomorphism
$$
\beta\ :\ \text{ad}(E_P)\ \longrightarrow\ \text{ad}(E_L).
$$
We have the following commutative diagram of equivariant holomorphic vector bundles:
\begin{center}
\begin{equation}\label{s}
\begin{tikzpicture}[baseline={([yshift=-0.5ex]current bounding box.center)}]
\matrix(m)[matrix of math nodes,
row sep=2em, column sep=2.5em,
text height=1.5ex, text depth=0.25ex]
{
& 0 & 0 & & \\
& {\rm kernel}(\beta) & {\rm kernel}(\gamma)& & \\
0 & \ad(E_{P}) & \mathcal{A}(E_{P}) & TX(-D) & 0 \\
0 & \ad(E_{L}) & \mathcal{A}(E_{L}) & TX(-D) & 0\\
};
\path[->]
(m-3-2) edge node[above] {$j_{P}$} (m-3-3);
\path[->]
(m-3-3) edge node[above] {$\rho_{P}$} (m-3-4);
\path[->]
(m-4-2) edge node[below] {$j_{L}$} (m-4-3);
\path[->]
(m-4-3) edge node[below] {$\rho_{L}$} (m-4-4);
\path[->]
(m-3-4) edge node[right] {$id$} (m-4-4);
\path[->]
(m-1-3) edge node[right] {} (m-2-3);
\path[->]
(m-1-2) edge node[right] {} (m-2-2);
\path[->]
(m-3-3) edge node[right] {$\gamma$} (m-4-3);
\path[->]
(m-3-2) edge node[right] {$\beta$} (m-4-2);
\path[->]
(m-3-1) edge node[right] {} (m-3-2);
\path[->]
(m-4-1) edge node[right] {} (m-4-2);
\path[->]
(m-3-4) edge node[right] {} (m-3-5);
\path[->]
(m-4-4) edge node[right] {} (m-4-5);
\path[->]
(m-2-2) edge node[right] {$\alpha^{'}$} (m-3-2);
\path[->]
(m-2-3) edge node[right] {$\alpha$} (m-3-3);
\path[->]
(m-2-2) edge node[above] {$\sigma$} (m-2-3);
\path[->]
(m-2-2) edge node[below] {$\cong$} (m-2-3);
\end{tikzpicture}
\end{equation}
\end{center}
Here $\rho_{P}$ (respectively, $\rho_{L}$) is given by $\omega_P$ (respectively, $\omega_L$) in
\eqref{dp2a} (respectively, \eqref{dp2b}) (see \eqref{p}), while $j_{P}$ (respectively, $j_{L}$) is
the natural inclusion map (see \eqref{r}).

Since $D\,\neq\, 0$, the equivariant Lie algebroid $\mathcal{L}$ in \eqref{ei} is non-split (see
\cite{Bi1} for definition of non-split Lie algebroids).
Consequently, the equivariant $L$--bundle $E_{L}$ admits an equivariant 
holomorphic connection for the Lie algebroid $\mathcal{L}$ \cite[Theorem 1.1]{Bi1}.

Let $$\delta\ :\ TX(-D)\ \longrightarrow\ \mathcal{A}(E_{L})$$
be an equivariant ${\mathcal O}_X$--linear homomorphism such that
\begin{equation}\label{l}
\rho_{L}\circ\delta\ =\ {\rm Id}_{TX(-D)},
\end{equation}
where $\rho_L$ is the homomorphism in \eqref{s}. Define
$$
B(\delta)\ :=\ \gamma^{-1}(\delta(TX(-D)))\ \subset\ \mathcal{A}(E_{P}),
$$
where $\gamma$ is the surjective map in \eqref{s}. So $B(\delta)$ fits in the following
commutative diagram of equivariant holomorphic vector bundles:
\begin{center}
\begin{equation}\label{a}
\begin{tikzpicture}[baseline={([yshift=-0.5ex]current bounding box.center)}]
\matrix(m)[matrix of math nodes,
row sep=2em, column sep=2.5em,
text height=1.5ex, text depth=0.25ex]
{
B(\delta) & \mathcal{A}(E_{P}) \\
TX(-D) & \mathcal{A}(E_{L}) \\
};
\path[->]
(m-1-1) edge node[above] {$\widetilde{\delta}$} (m-1-2);
\path[->]
(m-1-1) edge node[left] {$\widetilde{\gamma}$} (m-2-1);
\path[->]
(m-2-1) edge node[below] {$\delta$} (m-2-2);
\path[->]
(m-1-2) edge node[right] {$\gamma$} (m-2-2);
\end{tikzpicture}
\end{equation}
\end{center}
As $\gamma$ is surjective, so is $\widetilde{\gamma}$; as $\delta$ is injective, so is $\widetilde{\delta}$.
Furthermore, there is a natural isomorphism of equivariant holomorphic vector bundles 
${\rm kernel}(\widetilde{\gamma})\, \stackrel{\cong}{\longrightarrow}\, {\rm kernel}(\gamma)$. On the other hand,
from \eqref{s} it follows that ${\rm kernel}(\gamma)\, \stackrel{\cong}{\longrightarrow}\, {\rm kernel}(\beta)$.
Combining these two isomorphisms we have an isomorphism
$$
\eta\ :\ {\rm kernel}(\widetilde{\gamma})\ \stackrel{\cong}{\longrightarrow}\ {\rm kernel}(\beta),
$$ which fits in the following commutative diagram of equivariant holomorphic vector bundles:
\begin{center}
\begin{tikzpicture}
\matrix(m)[matrix of math nodes,
row sep=2em, column sep=2.5em,
text height=1.5ex, text depth=0.25ex]
{
{\rm kernel}(\widetilde{\gamma}) & {\rm kernel}(\beta) \\
B(\delta)& \mathcal{A}(E_{P}) \\
};
\path[->]
(m-1-1) edge node[above] {$\eta$} (m-1-2);
\path[->]
(m-1-1) edge node[below] {$\cong$} (m-1-2);
\path[->]
(m-1-1) edge node[left] {$u$} (m-2-1);
\path[->]
(m-2-1) edge node[below] {$\widetilde{\delta}$} (m-2-2);
\path[->]
(m-1-2) edge node[right] {$\alpha\circ\sigma=j_{P}\circ\alpha^{'}$} (m-2-2);
\end{tikzpicture}
\end{center}

\textbf{Claim 1:}\, The short exact sequence of equivariant holomorphic vector bundles
\begin{equation}\label{2}
0\ \longrightarrow\ {\rm kernel}(\beta)\ \xrightarrow{\,\,\,u\circ\eta^{-1}\,\,\,}\ B(\delta)
\ \overset{\widetilde{\gamma}}\longrightarrow\ TX(-D)\ \longrightarrow\ 0
\end{equation}
splits holomorphically which is also an equivariant splitting.

{\it Proof of Claim 1:}\, As in \eqref{levi}, $\mathfrak{n}$ denotes the Lie algebra of the
unipotent radical $R_u(P)$ in \eqref{eu}. Let
$$
\mathcal{R}(E_{P})\ :=\ E_P\times^P \mathfrak{n}\ \longrightarrow\ X
$$
be the vector bundle associated to the principal $P$--bundle $E_P$ for the adjoint action
of $P$ on $\mathfrak{n}$. Using \eqref{levi} it follows that there is a natural isomorphism of equivariant
holomorphic vector bundles 
\begin{equation}\label{3}
\mathcal{R}(E_{P})\,\ \cong\,\ {\rm kernel}(\beta).
\end{equation}

Let $\varphi_1\, \in\, H^1(X,\, \text{Hom}(TX(-D),\, {\rm kernel}(\beta)))^\Gamma\,=\,
H^1(X,\, {\rm kernel}(\beta)\otimes K_X\otimes{\mathcal O}_X(D))^\Gamma$ be the extension class for \eqref{2}
(recall that \eqref{2} is $\Gamma$--equivariant). The line bundle $K_X\otimes{\mathcal O}_X(D)$ will be
denoted by $K_X(D)$. Using \eqref{3}, the cohomology class $\varphi_1$ corresponds to a cohomology class
\begin{equation}\label{vp}
\varphi\,\ \in\, \ H^1(X,\ \mathcal{R}(E_{P})\otimes K_X(D))^\Gamma.
\end{equation}
By Serre duality,
\begin{equation}\label{5}
H^{1}(X,\, \mathcal{R}(E_{P})\otimes K_{X}(D))^{\Gamma}\ \cong\ (H^{0}(X,\,
\mathcal{R}(E_{P})^{*}\otimes \mathcal{O}_{X}(-D))^\Gamma)^{*}.
\end{equation}
As $D$ is an effective divisor, there is a natural inclusion map 
\begin{equation}\label{6} 
H^{0}(X,\, \mathcal{R}(E_{P})^{*}\otimes\mathcal{O}_{X}(-D))^{\Gamma}\ \hookrightarrow\
H^{0}(X,\ \mathcal{R}(E_{P})^{*})^{\Gamma}.
\end{equation}

Since the reduction $E_P\, \subset\, E_G$ is equivariantly infinitesimally rigid, from \eqref{6} and
\eqref{er2} we conclude that
$$
H^{0}(X,\, \mathcal{R}(E_{P})^{*}\otimes\mathcal{O}_{X}(-D))^{\Gamma}\ \,=\ \, 0.
$$
So from \eqref{5},
$$
H^{1}(X,\ \mathcal{R}(E_{P})\otimes K_{X}(D))^{\Gamma}\,\ =\, \ 0.
$$
In particular, we have $\varphi\,=\, 0$ (see \eqref{vp}). Thus the short exact sequence
in \eqref{2} splits holomorphically which is also an equivariant splitting.
This proves \textbf{Claim 1}.

Consequently, there is a map of equivariant holomorphic 
vector bundles $$\widehat{\theta}\,:\, TX(-D)\, \longrightarrow\, B(\delta)$$ such that 
\begin{equation}\label{o}
\widetilde{\gamma}\circ\widehat{\theta}\,\ =\,\ {\rm Id}_{TX(-D)},
\end{equation}
where $\widetilde{\gamma}$ is the homomorphism in \eqref{2}.

Let $\phi_{P}\,:\,\mathcal{A}(E_{P})\,\longrightarrow\, \At(E_{P})$ be the map of equivariant holomorphic
vector bundles as in \eqref{q} (for the equivariant holomorphic $P$--bundle $E_{P}$ and the
equivariant holomorphic Lie algebroid $\mathcal{L}$ in \eqref{ei}). Define a map of equivariant holomorphic
vector bundles $\delta^{'}\,:\, TX(-D)\, \longrightarrow\, \At(E_{P})$ by 
\[\delta^{'}\ :=\ \phi_{P}\circ\widetilde{\delta}\circ\widehat{\theta}.\] Thus:
\[\omega_{P}\circ\delta^{'}
=\omega_{P}\circ\phi_{P}\circ\widetilde{\delta}\circ\widehat{\theta}
=\iota\circ\rho_{P}\circ\widetilde{\delta}\circ\widehat{\theta}\]
\[=\iota\circ\rho_{L}\circ\gamma\circ\widetilde{\delta}\circ\widehat{\theta}~\,\,\,\,[\rho_{L}\circ\gamma=\rho_{P}~
\from~\eqref{s}]\]
\[=\iota\circ\rho_{L}\circ\delta\circ\widetilde{\gamma}\circ\widehat{\theta}~\,\,\,\,
[\delta\circ\widetilde{\gamma}=\gamma\circ\widetilde{\delta}~\from~\eqref{a}]\]
\[=\iota\circ\widetilde{\gamma}\circ\widehat{\theta}~\,\,\,\,[\rho_{L}\circ\delta=
{\rm Id}_{TX(-D)}~\,\, \from~\eqref{l}]\]
\[=\iota~\,\,\,\,[\widetilde{\gamma}\circ\widehat{\theta}={\rm Id}_{TX(-D)}~\from~\eqref{o}].\]
Note that the above equality $\omega_{P}\circ\phi_{P}\circ\widetilde{\delta}\circ\widehat{\theta}
=\iota\circ\rho_{P}\circ\widetilde{\delta}\circ\widehat{\theta}$ is implied by the equality
$\omega_{P}\circ\phi_{P}=\iota\circ\rho_{P}$ which follows from \eqref{d} applied to $(E_{P},\mathcal{L})$.
This proves \eqref{de}.

In view of Remark \ref{rem}, the composition of homomorphisms $\delta^{'}\circ\phi$, where $\delta'$ is the
homomorphism in \eqref{de} and $\phi$ is the anchor map (see the statement of Theorem \ref{4.1}), gives
an equivariant holomorphic $(V,\,\phi)$--connection on $E_P$.
\end{proof}

\begin{corollary}\label{imp}
Let $X$ be a compact connected Riemann surface and $G$ a connected complex reductive affine algebraic group.
Let $\Gamma$ be a finite group of biholomorphisms of $X$ and $E_{G}$ a $\Gamma$--equivariant holomorphic 
principal $G$--bundle on $X$ such that $E_G$ is not semistable. Take any $\Gamma$--equivariant holomorphic Lie 
algebroid $(V,\, \phi)$ on $X$ that satisfies the condition that the anchor map
$\phi$ not surjective. Then the Harder--Narasimhan reduction $E_{P}\,\subset\, 
E_{G}$ of $E_G$ admits a $\Gamma$--equivariant holomorphic $(V,\, \phi)$--connection.
\end{corollary}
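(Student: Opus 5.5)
The plan is to combine Proposition \ref{3.3} with Theorem \ref{4.1}; the corollary should follow with almost no additional work.

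\textbf{Step 1: the reduction is equivariant.} Since $E_G$ is a $\Gamma$--equivariant holomorphic principal $G$--bundle that is not semistable, Proposition \ref{3.3} gives two facts about the Harder--Narasimhan reduction $E_P\,\subset\, E_G$ of Theorem \ref{1.2}. First, it is $\Gamma$--equivariant; this is where Lemma \ref{leme} is used, since the defining section $\sigma$ of $E_G/P$ commutes with the $\Gamma$--action. Second, it satisfies
$$H^0(X,\,\ad(E_G)/\ad(E_P))\ =\ 0 .$$

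\textbf{Step 2: equivariant infinitesimal rigidity.} The $\Gamma$--invariant part of a zero vector space is zero, so
$$H^0(X,\,\ad(E_G)/\ad(E_P))^\Gamma\ =\ 0 .$$
This is exactly the condition \eqref{er}, so the reduction is equivariantly infinitesimally rigid.

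\textbf{Step 3: apply Theorem \ref{4.1}.} All hypotheses of Theorem \ref{4.1} now hold:
\begin{itemize}
\item $E_G$ is an equivariant holomorphic principal $G$--bundle;
\item $E_P\,\subset\, E_G$ is an equivariant reduction to the parabolic subgroup $P$, and it is equivariantly infinitesimally rigid;
\item $(V,\,\phi)$ is a $\Gamma$--equivariant Lie algebroid on $X$ whose anchor map is not surjective.
\end{itemize}
Theorem \ref{4.1} therefore produces a $\Gamma$--equivariant holomorphic $(V,\,\phi)$--connection on $E_P$, which is the claim.

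There is no real obstacle. The only point needing care is that the equivariant structure on $E_P$ entering Theorem \ref{4.1} is the one induced from $E_G$ via Lemma \ref{leme}. This holds because $E_P$ is the preimage of the $\Gamma$--invariant section $\sigma(X)\,\subset\, E_G/P$, so $E_P$ is a $\Gamma$--invariant sub-manifold of $E_G$ and inherits the restricted action, which commutes with the action of $P$.
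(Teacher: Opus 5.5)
Your proposal is correct and follows the paper's own proof, which likewise deduces the corollary directly from Proposition \ref{3.3} (the Harder--Narasimhan reduction is $\Gamma$--equivariant with $H^0(X,\,\ad(E_G)/\ad(E_P))=0$, hence equivariantly infinitesimally rigid) combined with Theorem \ref{4.1}. Your extra remark that $E_P$ is $\Gamma$--invariant, being the preimage of the $\Gamma$--invariant section, is a harmless elaboration of what the paper draws from Lemma \ref{leme}.
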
 

\begin{proof}
This follows directly from Theorem \ref{4.1} and Proposition \ref{3.3}.
\end{proof}

\section{Criterion for parabolic Lie algebroid connection}

As before, take a pair $(X,\,S)$, where $X$ is a compact connected Riemann surface and $S$ is a finite set of marked 
points in $X$. Fix a ramified Galois covering $\sigma\,:\,Y\,\longrightarrow\, X$ of $X$ with finite Galois group 
$\Gamma\,=\,\Gal(\sigma)$. Take a connected complex reductive affine algebraic group $G$. Recall
$\mathscr{C}$, $\mathscr{C}_{\Gamma}$, $\mathscr{V}$ and $\mathscr{V}_{\Gamma}$ in Section \ref{third}.

Let $P$ be any parabolic subgroup of $G$ and $\mathcal{E}_{G}\,\in\,\mathscr{C}$ a parabolic principal 
$G$--bundle. Denote
\begin{equation}\label{pg}
E_{G}\ :=\ \Phi^{-1}(\mathcal{E}_{G})\ \in\ \mathscr{C}_{\Gamma},
\end{equation}
where $\Phi^{-1}$ is the quasi-inverse of $\Phi$ in Theorem \ref{char}(i).

Let $E_{P}\subset E_{G}$ be any $\Gamma$--equivariant holomorphic reduction of structure group to $P$
of the $\Gamma$--equivariant principal $G$--bundle $E_{G}$ in \eqref{pg}. Consequently,
$\ad(E_{P})\,\subset\, \ad(E_{G})$ is a $\Gamma$--equivariant subbundle.

Define $\mathcal{E}_{P}\,:=\, \Phi(E_{P})$, where $\Phi$ is the same correspondence as in
Theorem \ref{char}(i) but for $P$ (note that Theorem \ref{char} is independent of the reductivity
assumption on the structure group of the principal bundle).

We call $\mathcal{E}_{P}$ to be a {\it parabolic reduction of structure group of $\mathcal{E}_{G}$ to the
parabolic subgroup $P$}. As $\Psi$ in Theorem $\ref{char}(i)$ is an equivalence of
categories, the second functorial isomorphism of Theorem $\ref{char}(ii)$ shows that there is an inclusion of parabolic vector bundles: 
 \[\ad_{*}(\mathcal{E}_{P})\,\ \hookrightarrow\,\ \ad_{*}(\mathcal{E}_{G}).\] 
It is known that $\Psi$ is precisely the functor $(\sigma_{*})^{\Gamma}$ (see for instance \cite{Bi3}) which
first pushes forward a $\Gamma$--equivariant holomorphic vector bundle on $Y$ to $X$ via $\sigma$ and then
takes the $\Gamma$--invariant subsheaf. As $\Gamma$ is a finite group, $(\sigma_{*})^{\Gamma}$ is an exact
functor from the category of coherent $\Gamma$--sheaves on $Y$ to the
coherent sheaves on $X$. Therefore, for any $k\,\geq\, 0$, we have
\begin{equation}\label{coh}
H^{k}(Y,\ \ad(E_{G})/\ad(E_{P}))^{\Gamma}\,\ \cong\,\ H^{k}(X,\
\ad_{*}(\mathcal{E}_{G})/\ad_{*}(\mathcal{E}_{P})).\end{equation}
In particular we have $E_{P}\,\subset\, E_{G}$ to be equivariantly infinitesimally rigid
(see \eqref{er}) if and only if $H^{0}(X,\,\ad_{*}(\mathcal{E}_{G})/\ad_{*}(\mathcal{E}_{P}))\,=\,0$.

If $H^{0}(X,\,\ad_{*}(\mathcal{E}_{G})/\ad_{*}(\mathcal{E}_{P}))\,=\,0$,
we call the parabolic reduction of structure group $\mathcal{E}_{P}$ of $\mathcal{E}_{G}$ to $P$
to be {\it parabolically infinitesimally rigid}. 

\begin{theorem}\label{MAIN}
Take a compact connected Riemann surface $X$ with a finite set of marked points $S\, \subset\, X$.
Take a connected complex reductive affine algebraic group $G$ and a parabolic subgroup $P\, \subset\, G$.
Let $\mathcal{E}_{G}$ be a parabolic principal $G$--bundle on $(X,\,S)$ equipped with a
parabolic reduction of structure group $\mathcal{E}_{P}\, \subset\, \mathcal{E}_{G}$ of
$\mathcal{E}_{G}$ to $P$ such that the reduction is parabolically infinitesimally rigid.
Then for any parabolic Lie algebroid $(V_{*},\, \phi_{*})$ on $(X,\,S)$ such that
$\phi_{*}$ is not surjective, there is a holomorphic parabolic $(V_{*},\, \phi_{*})$--connection
on $\mathcal{E}_{P}$.
\end{theorem}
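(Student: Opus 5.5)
The plan is to transport the problem to the Galois cover $\sigma\,:\,Y\,\longrightarrow\, X$, apply Theorem \ref{4.1} there, and then pull the resulting connection back to $(X,\,S)$ using Theorem \ref{char}.

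\textbf{Transfer the data to $Y$.} By Theorem \ref{char}(1) and the construction preceding the statement, $\mathcal{E}_{G}$ and $\mathcal{E}_{P}$ come from $\Gamma$--equivariant objects on $Y$. Precisely, $\mathcal{E}_{G}\,=\,\Phi(E_{G})$ and $\mathcal{E}_{P}\,=\,\Phi(E_{P})$ for a $\Gamma$--equivariant reduction $E_{P}\,\subset\, E_{G}$ on $Y$. By Theorem \ref{char}(4), the parabolic Lie algebroid $(V_{*},\,\phi_{*})$ corresponds to a $\Gamma$--equivariant Lie algebroid $(V,\,\phi)$ on $Y$.

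\textbf{Check the two hypotheses of Theorem \ref{4.1} on $Y$.}

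\emph{Rigidity.} The isomorphism \eqref{coh} with $k\,=\,0$ gives $H^{0}(Y,\,\ad(E_{G})/\ad(E_{P}))^{\Gamma}\,\cong\, H^{0}(X,\,\ad_{*}(\mathcal{E}_{G})/\ad_{*}(\mathcal{E}_{P}))$. The right-hand side vanishes by hypothesis, so $E_{P}\,\subset\, E_{G}$ is equivariantly infinitesimally rigid in the sense of \eqref{er}.

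\emph{Non-surjectivity of the anchor.} I must show that $\phi\,:\,V\,\longrightarrow\, TY$ is not surjective. By Theorem \ref{char}(3), $\Psi(TY)\,\cong\, TX_{*}$, and $\Psi\,=\,(\sigma_{*})^{\Gamma}$ is exact. It therefore sends $\phi$ to $\phi_{*}$ after the identification $\zeta$. Suppose $\phi$ were surjective. Then $\operatorname{coker}(\phi)\,=\,0$, and exactness of $\Psi$ gives $\operatorname{coker}(\phi_{*})\,=\,\Psi(\operatorname{coker}\phi)\,=\,0$, so $\phi_{*}$ would be surjective, contradicting the hypothesis.

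One subtlety needs care here. Surjectivity of $\phi_{*}$ should be read as surjectivity of the underlying sheaf map $V\,\longrightarrow\, TX(-\mathbb{S})$, and I must confirm that $\Psi$ of the morphism $\phi$ really is this map. This follows from the naturality in Theorem \ref{char}(3)--(4). I expect this identification to be the only step requiring genuine attention.

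\textbf{Conclude.} Theorem \ref{4.1} now gives a $\Gamma$--equivariant holomorphic $(V,\,\phi)$--connection on $E_{P}$. Theorem \ref{char}(5) was stated for $G$, but as noted in the text the correspondence of Theorem \ref{char} does not use reductivity, so it applies equally to the structure group $P$. It converts this equivariant connection into a parabolic $(V_{*},\,\phi_{*})$--connection on $\Phi(E_{P})\,=\,\mathcal{E}_{P}$, which proves the theorem.
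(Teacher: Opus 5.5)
Your proposal is correct and follows the paper's proof essentially step for step. Both arguments transfer to the $\Gamma$--equivariant setting on $Y$ via Theorem \ref{char}, obtain equivariant rigidity from \eqref{coh}, apply Theorem \ref{4.1}, and return to $(X,\,S)$ via Theorem \ref{char}(5) applied to the structure group $P$.

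There are two small differences in what is spelled out. Your exactness argument for non-surjectivity of $\phi$ (surjectivity of $\phi$ would force surjectivity of $(\sigma_*)^{\Gamma}(\phi)\,=\,\phi_*$) justifies a step the paper only asserts. The paper, for its part, adds a remark that $\sigma$ can be chosen so that $\mathcal{E}_G$ and $(V_*,\,\phi_*)$ are realized on the same cover; this is already implicit in the fixed-$N_i$ setup of Theorem \ref{char} that you rely on.
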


\begin{proof}
Recall Theorem \ref{char}. There is a ramified Galois covering map $\sigma\,:\, Y\,\longrightarrow\, X$,
whose branch locus is $S$, such that $\mathcal{E}_{G}$ corresponds to a
$\Gamma$--equivariant holomorphic principal $G$--bundle $E_G$ on $Y$, where $\Gamma\,=\, \text{Gal}(\sigma)$
is the Galois group for $\sigma$. Let $E_{P}\,\subset\, E_{G}$ be the $\Gamma$--equivariant holomorphic
reduction of structure group of $E_{G}$ to $P$ given by reduction $\mathcal{E}_{P}\, \subset\, \mathcal{E}_{G}$ of
$\mathcal{E}_{G}$ to $P$. Since the reduction $\mathcal{E}_{P}\, \subset\, \mathcal{E}_{G}$
is parabolically infinitesimally rigid, from \eqref{coh} we conclude that the reduction
$E_{P}\,\subset\, E_{G}$ is equivariantly infinitesimally rigid.

Let $(V,\,\phi)$ be the $\Gamma$--equivariant holomorphic Lie algebroid on $Y$ that corresponds to
the parabolic Lie algebroid $(V_{*},\, \phi_{*})$. It should be clarified that we can choose the
ramified Galois covering map $\sigma$ in such a way that any given finite set of parabolic bundles on
$(X,\,S)$ correspond to equivariant bundles on $Y$. To see this, take ramified Galois coverings for
the individual parabolic  bundles in the given finite set of parabolic
bundles. Take the fiber product of
these finitely many ramified Galois coverings, and then take its normalization. The resulting
ramified Galois covering satisfies the condition that each of the parabolic  bundles in the
given finite set corresponds to an equivariant  bundle on the covering. This shows that $\mathcal{E}_{G}$ and $(V_{*},\phi_{*})$ can simultaneously be realized as an equivariant principal $G-$bundle and an equivariant holomorphic Lie algebroid respectively.

We note that $\phi$ is not surjective because $\phi_{*}$ is not surjective.

By Theorem \ref{4.1} we know that there is a $\Gamma$--equivariant holomorphic $(V,\,\phi)$--connection on 
$E_{P}$. Therefore, applying Theorem \ref{char}(v) for the complex linear algebraic group $P$, it follows 
that there is a parabolic $(V_{*},\, \phi_{*})$--connection on $\mathcal{E}_{P}$.
\end{proof}

Corollary \ref{imp} gives the following:

\begin{corollary}\label{main}
Take a compact connected Riemann surface $X$ with a finite set of marked points $S\, \subset\, X$.
Take a connected complex reductive affine algebraic group $G$ and a parabolic subgroup $P\, \subset\, G$.
Let $\mathcal{E}_{G}$ be a parabolic principal $G$--bundle on $(X,\,S)$ which is not semistable. Let
\begin{equation}\label{hnr}
\mathcal{E}_{P}\ \subset\ \mathcal{E}_{G}
\end{equation}
be the parabolic 
Harder--Narasimhan reduction of structure group of $\mathcal{E}_{G}$.
Then for any parabolic Lie algebroid $(V_{*},\, \phi_{*})$ on $(X,\,S)$ for which $\phi_{*}$ is not surjective, 
there is a holomorphic parabolic $(V_{*},\, \phi_{*})$--connection on $\mathcal{E}_{P}$.
\end{corollary}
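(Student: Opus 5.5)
The plan is to deduce Corollary \ref{main} from Theorem \ref{MAIN} by showing that the parabolic Harder--Narasimhan reduction is parabolically infinitesimally rigid. The verification will be carried out on the Galois cover using Proposition \ref{3.3}.

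\textbf{Step 1: choose a common cover.} I will choose a ramified Galois covering $\sigma\,:\,Y\,\longrightarrow\, X$ with group $\Gamma$ such that two objects are realized simultaneously: $\mathcal{E}_{G}$ corresponds to a $\Gamma$--equivariant principal $G$--bundle $E_G\,=\,\Phi^{-1}(\mathcal{E}_G)$, and $(V_*,\,\phi_*)$ corresponds to a $\Gamma$--equivariant Lie algebroid $(V,\,\phi)$ on $Y$. This is the fiber-product-and-normalization argument already used in the proof of Theorem \ref{MAIN}.

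\textbf{Step 2: match the two Harder--Narasimhan reductions.} The key point is that the parabolic Harder--Narasimhan reduction $\mathcal{E}_P\,\subset\,\mathcal{E}_G$ corresponds under $\Phi$ to the $\Gamma$--equivariant Harder--Narasimhan reduction $E_P\,\subset\, E_G$ of Proposition \ref{3.3}. I would argue this as follows.

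\begin{itemize}
\item Parabolic semistability of $\mathcal{E}_G$ is equivalent to semistability of $E_G$. The Harder--Narasimhan filtration of $\ad(E_G)$ is $\Gamma$--invariant, so it is also the equivariant Harder--Narasimhan filtration. Parabolic degrees on $X$ equal $\frac{1}{|\Gamma|}$ times degrees on $Y$ under $\Psi\,=\,(\sigma_*)^\Gamma$. Hence $\Psi$ sends the Harder--Narasimhan filtration of $\ad(E_G)$ to the parabolic Harder--Narasimhan filtration of $\ad_*(\mathcal{E}_G)$.
\item In particular, $\mathcal{E}_G$ not semistable implies that $E_G$ is not semistable.
\item The defining conditions of the Harder--Narasimhan reduction (semistability of the Levi bundle and positivity of degrees of the associated bundles for $\mathcal{S}(P)$) are likewise preserved in both directions by $\Phi$ and $\Psi$.
\end{itemize}

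By uniqueness of the Harder--Narasimhan reduction, $\Phi(E_P)\,=\,\mathcal{E}_P$, where $E_P$ is the reduction produced in Lemma \ref{leme}. This identification is the step I expect to be the main obstacle, since it depends on what definition of parabolic Harder--Narasimhan reduction one adopts. If the paper defines the parabolic reduction as the image under $\Phi$ of the equivariant one, this step becomes tautological. Otherwise I would compare via $\ad$, using $E_0\,=\,\ad(E_P)$ together with the degree relation above.

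\textbf{Step 3: conclude.} Proposition \ref{3.3} gives $H^0(Y,\,\ad(E_G)/\ad(E_P))\,=\,0$, so in particular its $\Gamma$--invariants vanish. By \eqref{coh},
$$H^{0}(X,\,\ad_{*}(\mathcal{E}_{G})/\ad_{*}(\mathcal{E}_{P}))\,=\,0,$$
that is, $\mathcal{E}_P$ is parabolically infinitesimally rigid. Theorem \ref{MAIN} then yields a parabolic $(V_*,\,\phi_*)$--connection on $\mathcal{E}_P$.

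Alternatively, once Step 2 is done, one can apply Corollary \ref{imp} on $Y$ to get an equivariant $(V,\,\phi)$--connection on $E_P$ directly. Here $\phi$ is non-surjective because $\phi_*$ is. Theorem \ref{char}(v) for $P$ then transports this connection back to $\mathcal{E}_P$.
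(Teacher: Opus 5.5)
Your proposal is correct and follows essentially the paper's route: pass to a Galois cover $Y$ realizing $\mathcal{E}_G$ (and the Lie algebroid), identify the parabolic Harder--Narasimhan reduction with the $\Gamma$--invariant Harder--Narasimhan reduction of $E_G$, and then invoke Corollary \ref{imp} together with Theorem \ref{char}(v). Your primary variant, via Proposition \ref{3.3}, \eqref{coh} and Theorem \ref{MAIN}, is equivalent. The only difference is that the paper does not argue your Step 2 at all; it cites the known comparison of the two Harder--Narasimhan reductions from the literature on parabolic and equivariant principal bundles, and remarks that the resulting structure group is only conjugate to $P$.
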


\begin{proof}
{}From Theorem \ref{char} we know that there is a ramified Galois covering map
$\sigma\,:\, Y\,\longrightarrow\, X$, whose branch locus is $S$, such that $\mathcal{E}_{G}$ corresponds to a
$\Gamma$--equivariant holomorphic principal $G$--bundle $E_G$ on $Y$, where $\Gamma\,=\, \text{Gal}(\sigma)$
is the Galois group for $\sigma$. Let
$$
E_Q\ \subset\ E_G
$$
be the Harder--Narasimhan reduction of $E_G$. It is known that the equivariant principal $Q$--bundle $E_Q$
corresponds to $\mathcal{E}_P$ in \eqref{hnr} \cite[Proposition 4.1]{BBN1}, \cite[Theorem 3.14]{BBN2}. In
particular, $Q$ is conjugate to $P$.

In view of the above, as in the proof of Theorem \ref{MAIN}, using the ramified Galois covering map
$\sigma\,:\, Y\,\longrightarrow\, X$, the result is deduced from Corollary \ref{imp}. 
\end{proof}

\section*{Acknowledgements}
We thank the referee for helpful comments to improve the exposition.
The first-named author acknowledges the support of a J. C. Bose Fellowship (JBR/2023/000003).

\section*{Declaration}
No data were used or generated in this project. The authors do not have any conflict of interests.

\end{document}